\theoremstyle{plain} 
\newtheorem{theorem}{\indent\sc Theorem}[section]
\newtheorem{lemma}[theorem]{\indent\sc Lemma}
\newtheorem{corollary}[theorem]{\indent\sc Corollary}
\theoremstyle{definition} 
\newtheorem{definition}[theorem]{\indent\sc Definition}
\newtheorem{remark}[theorem]{\indent\sc Remark}
\begin{document}

\title[2D
dissipative quasi-geostrophic flows]{The Temporal decay rate of   solutions to  2D
dissipative quasi-geostrophic flows}

 \author[X. Zhao]{Xiaopeng Zhao$^*$ } 

\subjclass[2010]{ 
 35Q35, 76D05.
}
%
\keywords{ 
Temporal decay rate; dissipative quasi-geostrophic flows; decay character; Fourier splitting method. }

\address{$^*$
School of Science\endgraf
Jiangnan University\endgraf
Wuxi 214122,~~~
P. R. China \endgraf
and
\endgraf
School of Mathematics\endgraf Southeast University\endgraf
Nanjing 210018,~~~P. R. China
}
\email{zhaoxiaopeng@jiangnan.edu.cn}


\begin{abstract}
In this paper, by   using Fourier splitting method and the expanded properties of decay character $r^*$, we establish the algebraic   decay rate of higher order derivative of  solutions to  2D dissipative quasi-geostrophic flows.
\end{abstract}\maketitle

{\small

 \section{Introduction}
Consider the quasi-geostrophic flows in $\mathbb{R}^2$:
\begin{equation} \label{1-1}
\left\{ \begin{aligned}
         &\partial_t\theta+u\cdot\nabla\theta+\nu(-\Delta)^{\alpha}\theta=0,\\
                  &\theta(x,0)=\theta_0(x),
                          \end{aligned} \right.
                          \end{equation}
where $\alpha\in(0,1]$, $\theta(x,t)$ is the potential temperature,  $u=(u_1,u_2)\in\mathbb{R}^2$ is the fluid velocity   determined by the scalar stream function $\Psi$ through
$$
u=(u_1,u_2)=\left(-\frac{\partial}{\partial x_2}\Psi,\frac{\partial}{\partial x_1}\Psi\right)=\nabla^{\perp}\Psi,
$$
and $\nu>0$ is a dissipative coefficient. For convenience, we set $\nu=1$.
The relation between the temperature $\theta$ and the stream function $\Psi$ is
$$
\Lambda\Psi=-\theta,
$$
where $\Lambda$ is the Riesz potential operator defined by the fractional power of $-\Delta$:
$$
\Lambda=(-\Delta)^{\frac12}\quad\hbox{and}\quad \widehat{\Lambda^{2\gamma}f}=\widehat{(-\Delta)^{\gamma}f}=|\xi|^{2\gamma}\hat{f},\quad\forall\gamma>0,
$$
and $\hat{f}$ denotes the Fourier transform of $f$.




Besides
this geophysical connection, system (\ref{1-1}) also shares many features with fundamental fluid motion equations. If the dissipative coefficient $\nu=0$, this system is comparable with the vorticity formulation of the Euler equations \cite{MT}. 
 From the mathematical viewpoint, system (\ref{1-1}) is strikingly similar to the 3D Navier-Stokes equations, although (\ref{1-1}) is considerably simpler than 3D Navier-Stokes equations. According to the selection of $\alpha$, there  are three cases for system (\ref{1-1}):
  subcritical when $\alpha\in(\frac12,1)$, critical when $\alpha=\frac12$ and supercritical when $\alpha\in(0,\frac12)$.
 For the subcritical   and critical ranges   of 2D quasi-geostrophic equations, the smoothness of global solutions has been proved (cf. \cite{Luis,CW,SG,KA} and the reference cited therein). However, it is also an open problem for the supercritical range  of system (\ref{1-1}) (cf. \cite{CM,CD,CA}).

 \begin{remark}
 If $u=\nabla^{\perp}(-\Delta)^{\frac{\beta}2-1}\theta$ with $\beta\in(0,1]$, the system (\ref{1-1}) is called the modified quasi-geostrophic equations, which also studied by many authors (cf. \cite{FNP,CIW,MX,Chae} and reference cited therein).
 \end{remark}

The temporal decay rate of solutions is   an interesting topic in the study of  dissipative equations.  With the aid of the classical Fourier splitting method \cite{S1,S2}, Constantin and Wu \cite{CW}, Schonbek and Schonbek \cite{SS} established some decay results for the subcritical quasi-geostrophic equations. On the basis of the maximal principle, C\'{o}rdoba and C\'{o}rdoba \cite{CD}, Ju \cite{Ju} invistigated the $L^p$-norm decay estimates. Moreover, based on the spectral decomposition of the Laplacian operator and iterative techniques, Dong and Liu \cite{DL} also obtained the upper bounds of weak solution and higher order derivatives of solutions to 2D quasi-geostrophic flows. Kato's method can also be used to study the $L^p$-norm decay rate of solutions to 2D quasi-geostrophic equations \cite{CL,Niche}. We also recall that  Zhou introduced a new method (see~\cite{Zhou2}, some people called Zhou's method) to handle decay rate problems of system (\ref{1-1}).

Recently, in order to characterize the decay rate of dissipative equations more profound, Bjorland and Schonbek\cite{ADE}, Niche and Schonbek \cite{NS} introduced the    idea of decay indicator $P_r $  and decay character $r^* $. Latterly, Brandolese \cite{B} improved the definition of the decay indicator and the decay character by taking advantage of the insight provided by the Littlewood-Paley analysis and the use of Besov spaces. For more details on $P_r$ and $r^*$, we refer to Section 2. We recall that there are some papers studied the decay estimate of solutions to dissipative equations by using the idea of decay character and Fourier splitting method (cf. \cite{A,Niche2,N3,Zhao}) and the reference cited therein).
 It is worth pointing out that, on the basis of Fourier splitting method and decay character $r^*$, Niche and Schonbek \cite{NS},  Ferreira, Niche and Planas \cite{FNP} obtained some new results on the dissipative quasi-geostrophic equations and modified quasi-geostrophic equations.

The following two results on the decay rate of solutions to system (\ref{1-1}) was proved by Niche and Schonbek \cite{NS}.
\begin{lemma}[\cite{NS}]
\label{lem4.2}
Let $\theta_0\in L^2(\mathbb{R}^2)$  have the decay character $r^*=r^*(\theta)\in(-1,\infty)$. Suppose that $\theta(x,t)$ is a weak solution to system (\ref{1-1}). Then
\begin{itemize}
\item  if $r^*\leq1-\alpha$,
$$
\| \theta(t)\|_{L^2}^2\leq C(1+t)^{-\frac1{\alpha}(1+r^*) },\quad\hbox{for~large}~t;
$$

\item  if $r^*\geq1-\alpha$,
$$
\| \theta(t)\|_{L^2}^2\leq C(1+t)^{-\frac1{\alpha}(2-\alpha) },\quad\hbox{for~large}~t.
$$
\end{itemize}
\end{lemma}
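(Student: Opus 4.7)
The plan is to combine Schonbek's Fourier splitting method with Duhamel's formula and the defining property of the decay character. First I will derive the basic energy identity: testing equation~(\ref{1-1}) against $\theta$ and using that $u=\nabla^{\perp}\Psi$ is divergence free (so the transport term vanishes upon integration) gives
\[
\frac{d}{dt}\|\theta(t)\|_{L^2}^2+2\|\Lambda^{\alpha}\theta(t)\|_{L^2}^2=0,
\]
which in Fourier variables reads $\frac{d}{dt}\|\theta\|_{L^2}^2=-2\int_{\mathbb{R}^2}|\xi|^{2\alpha}|\hat{\theta}(\xi,t)|^2\,d\xi$. I will then set the Fourier splitting weight $g(t)=(1+t)^{m}$ (with $m$ chosen large enough) and the time-dependent ball $B(t)=\{\xi:|\xi|^{2\alpha}\leq g'(t)/g(t)\}$ of radius $r(t)=(m/(1+t))^{1/(2\alpha)}$. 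Restricting the dissipation integral to $B(t)^{c}$ and rearranging yields the key differential inequality
\[
\frac{d}{dt}\bigl(g(t)^{2}\|\theta(t)\|_{L^2}^2\bigr)\leq 2g(t)g'(t)\int_{B(t)}|\hat{\theta}(\xi,t)|^{2}\,d\xi.
\]

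To estimate the low-frequency integral I will substitute Duhamel's formula $\hat{\theta}(\xi,t)=e^{-t|\xi|^{2\alpha}}\hat{\theta}_{0}(\xi)-\int_{0}^{t}e^{-(t-s)|\xi|^{2\alpha}}\widehat{\nabla\cdot(u\theta)}(\xi,s)\,ds$. The linear piece is dominated by $\int_{|\xi|\leq r(t)}|\hat{\theta}_{0}|^{2}\,d\xi$, and by the defining property of the decay character $r^{*}\in(-1,\infty)$ this is bounded above by $Cr(t)^{2(r^{*}+1)}=C(1+t)^{-(r^{*}+1)/\alpha}$ for large $t$. For the nonlinear piece I will exploit the structural identity $u=\nabla^{\perp}\Lambda^{-1}(-\theta)$, which is an $L^{2}$ isometry, so that $\|u\theta\|_{L^{1}}\leq\|u\|_{L^{2}}\|\theta\|_{L^{2}}=\|\theta\|_{L^{2}}^{2}$ and hence $|\widehat{\nabla\cdot(u\theta)}(\xi,s)|\leq C|\xi|\|\theta(s)\|_{L^{2}}^{2}$. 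Applying Cauchy--Schwarz in $s$ and using $\int_{0}^{t}e^{-2(t-s)|\xi|^{2\alpha}}\,ds\leq(2|\xi|^{2\alpha})^{-1}$ gives a pointwise bound of order $|\xi|^{1-\alpha}(\int_{0}^{t}\|\theta(s)\|_{L^2}^{4}\,ds)^{1/2}$; squaring and integrating over $B(t)$, using $\int_{|\xi|\leq r(t)}|\xi|^{2-2\alpha}\,d\xi=Cr(t)^{4-2\alpha}$, contributes $C(1+t)^{-(2-\alpha)/\alpha}\int_{0}^{t}\|\theta(s)\|_{L^{2}}^{4}\,ds$.

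Combining these bounds, integrating the differential inequality and dividing by $g(t)^{2}$ produces
\[
\|\theta(t)\|_{L^{2}}^{2}\leq C(1+t)^{-(r^{*}+1)/\alpha}+C(1+t)^{-(2-\alpha)/\alpha}\int_{0}^{t}\|\theta(s)\|_{L^{2}}^{4}\,ds,
\]
and the two claimed rates will follow as soon as the time integral on the right is uniformly bounded: the smaller of $(r^{*}+1)/\alpha$ and $(2-\alpha)/\alpha$ then governs the decay, and these exponents coincide exactly at the threshold $r^{*}=1-\alpha$ that separates the two cases of the statement. The main obstacle is closing the bootstrap implicit in controlling $\int_{0}^{t}\|\theta(s)\|_{L^{2}}^{4}\,ds$, which demands a preliminary decay rate strictly faster than $t^{-1/2}$. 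I plan to obtain such a preliminary rate by first running the Fourier splitting argument with the crude a priori estimate $\|\theta(s)\|_{L^{2}}\leq\|\theta_{0}\|_{L^{2}}$ substituted into the Duhamel nonlinearity, producing a first positive decay exponent, and then iterating: at each round the improved decay is fed back into the Cauchy--Schwarz step, enlarging the exponent until it exceeds $1/2$, at which point $\int_{0}^{t}\|\theta\|_{L^{2}}^{4}\,ds$ becomes finite and the final estimate above locks in the optimal rate $\min\{(r^{*}+1)/\alpha,(2-\alpha)/\alpha\}$.
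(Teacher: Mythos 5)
The paper does not prove this lemma: it is quoted verbatim from Niche--Schonbek \cite{NS}, so there is no in-paper proof to compare against. Your argument is, in substance, the standard Fourier-splitting proof that \cite{NS} (and Schonbek--Schonbek) give, and it is also the template the paper itself uses for its own Lemmas \ref{lem4.3}--\ref{lem4.6}: energy identity, splitting ball $B(t)$ with $g(t)=(1+t)^m$, Duhamel for $\hat\theta$, the decay character controlling the low frequencies of the linear part via $\int_{B_\rho}|\hat\theta_0|^2\,d\xi\le C\rho^{2(r^*+1)}$, and the divergence form $u\cdot\nabla\theta=\nabla\cdot(u\theta)$ together with $\|u\|_{L^2}=\|\theta\|_{L^2}$ for the nonlinearity. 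Your exponents check out, and the two cases of the statement are exactly $\min\{(1+r^*)/\alpha,(2-\alpha)/\alpha\}$ as you say.

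The one place you diverge from the paper's own machinery is the treatment of the Duhamel integral: you apply Cauchy--Schwarz in $s$ against the heat kernel, which costs a factor $|\xi|^{-\alpha}$ and leaves $\int_0^t\|\theta\|_{L^2}^4\,ds$, whereas the paper's Lemma \ref{lem4.1} simply drops the semigroup factor and keeps $|\xi|\int_0^t\|\theta\|_{L^2}^2\,ds$, squared. Both close, and both require the same bootstrap (a preliminary rate strictly better than $t^{-1/2}$ for $\|\theta\|_{L^2}^2$); your geometric iteration starting from $\|\theta(t)\|_{L^2}\le\|\theta_0\|_{L^2}$, with first gain $(2-2\alpha)/\alpha>0$ since $\alpha<1$, does terminate, though you should note the degenerate step where the exponent equals $1/2$ produces a logarithm that one absorbs in the next iteration. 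The only genuine loose end is that for a weak solution with merely $\theta_0\in L^2$ the energy identity and the pointwise Fourier ODE must be justified on approximations (Galerkin or retarded mollification) and passed to the limit; this is standard but should be stated.
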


\begin{lemma}[\cite{NS}]
\label{lem4.2-1}
Let $\frac12<\alpha<1$, $\alpha\leq s$, $\kappa>0$ and $\theta_0\in H^{\kappa}(\mathbb{R}^2)$  have the decay character $r^*=r^*(\theta)\in(-1,\infty)$. Suppose that $\theta(x,t)$ is a   solution to system (\ref{1-1}). Then
\begin{itemize}
\item  if $r^*\leq1-\alpha$,
$$
\| \Lambda^{\kappa}\theta(t)\|_{L^2}^2\leq C(1+t)^{-\frac1{\alpha}(\kappa+1+r^*) },\quad\hbox{for~large}~t;
$$

\item  if $r^*\geq1-\alpha$,
$$
\|\Lambda^{\kappa} \theta(t)\|_{L^2}^2\leq C(1+t)^{-\frac1{\alpha}(\kappa+
2-\alpha) },\quad\hbox{for~large}~t.
$$
\end{itemize}
\end{lemma}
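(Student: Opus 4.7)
The plan is to combine an energy estimate at the $\dot H^\kappa$ level with Schonbek's Fourier splitting scheme, using the decay character $r^*$ to quantify the low-frequency content of the solution and Lemma 1.2 to recycle the $L^2$ decay into control of the nonlinearity.

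First I would derive the energy identity by applying $\Lambda^\kappa$ to (\ref{1-1}), pairing with $\Lambda^\kappa\theta$ in $L^2$, and integrating by parts to obtain
$$
\frac{1}{2}\frac{d}{dt}\|\Lambda^\kappa\theta\|_{L^2}^2+\|\Lambda^{\kappa+\alpha}\theta\|_{L^2}^2=-\int_{\mathbb{R}^2}\Lambda^\kappa(u\cdot\nabla\theta)\,\Lambda^\kappa\theta\,dx.
$$
Because $u=\nabla^\perp\Lambda^{-1}\theta$ is divergence-free and is bounded by $\theta$ via a Riesz transform, the nonlinear term can be rewritten as a commutator and controlled by a Kato--Ponce-type estimate; the hypothesis $\alpha>\tfrac12$ gives the smoothing margin needed to absorb the dangerous part into the dissipation, leaving a remainder controlled by $\|\theta\|_{L^2}$ and $\|\Lambda^\alpha\theta\|_{L^2}$, both of which decay at rates already supplied by Lemma~\ref{lem4.2} and interpolation.

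Next I would run the Fourier splitting argument. Choosing $g(t)=\bigl(\tfrac{k}{1+t}\bigr)^{1/(2\alpha)}$ with $k$ sufficiently large and writing $B(t)=\{\xi:|\xi|\le g(t)\}$, the inequality
$$
\|\Lambda^{\kappa+\alpha}\theta\|_{L^2}^2\ge g(t)^{2\alpha}\|\Lambda^\kappa\theta\|_{L^2}^2-g(t)^{2\alpha}\int_{B(t)}|\xi|^{2\kappa}|\hat\theta(t,\xi)|^2\,d\xi
$$
combined with the energy identity and the integrating factor $(1+t)^{k}$ reduces the problem to estimating the low-frequency integral. Using the Duhamel formula $\hat\theta(t,\xi)=e^{-|\xi|^{2\alpha}t}\hat\theta_0(\xi)-\int_0^t e^{-|\xi|^{2\alpha}(t-s)}\widehat{u\cdot\nabla\theta}(s,\xi)\,ds$ I would split this integral into a linear piece and a Duhamel piece. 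The linear piece satisfies
$$
\int_{B(t)}|\xi|^{2\kappa}e^{-2|\xi|^{2\alpha}t}|\hat\theta_0(\xi)|^2\,d\xi\le g(t)^{2\kappa}\int_{B(t)}|\hat\theta_0(\xi)|^2\,d\xi\lesssim g(t)^{2\kappa+2r^*+2}
$$
by the very definition of the decay character (or Brandolese's Besov reformulation), which is where $r^*\in(-1,\infty)$ enters. The Duhamel piece is estimated by the Hausdorff--Young inequality, the bound $\|u\cdot\nabla\theta\|_{L^1}\lesssim\|\theta\|_{L^2}\|\nabla\theta\|_{L^2}$, and the decay rates of Lemma~\ref{lem4.2}. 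Feeding both bounds into the Gronwall inequality yields the $(1+t)^{-(\kappa+1+r^*)/\alpha}$ rate when $r^*\le 1-\alpha$; in the regime $r^*\ge 1-\alpha$ the linear low-frequency contribution is dominated by the Duhamel remainder, producing the saturated rate $(1+t)^{-(\kappa+2-\alpha)/\alpha}$.

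The main obstacle is the nonlinear term: at the $\dot H^\kappa$ level the commutator bound must be sharp enough that the quantities appearing after absorption decay at least as fast as the target rate, and this forces a careful interpolation between $\|\theta\|_{L^2}$, $\|\Lambda^{\kappa+\alpha}\theta\|_{L^2}$, and intermediate norms. A secondary but non-trivial point is propagating the low-frequency behaviour under the Duhamel formula so that the nonlinear contribution to the low-frequency integral does not degrade the exponent obtained from $\hat\theta_0$; this is what produces the precise threshold $r^*=1-\alpha$ between the two regimes.
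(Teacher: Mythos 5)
Your architecture is the right one and is essentially the Niche--Schonbek scheme that this paper imports (the lemma is only cited from [NS] here, but the paper's own Lemmas \ref{lem4.3}--\ref{lem4.6} run exactly this machine for the time-derivative versions): an $\dot H^\kappa$ energy inequality with the nonlinearity absorbed into the dissipation for large $t$, Fourier splitting on a ball of radius $\rho(t)\sim(1+t)^{-1/(2\alpha)}$, and Duhamel's formula plus the decay character to control the low-frequency integral. Your treatment of the linear low-frequency piece is correct: $\int_{B(t)}|\xi|^{2\kappa}e^{-2|\xi|^{2\alpha}t}|\hat\theta_0(\xi)|^2\,d\xi\lesssim\rho^{2\kappa+2r^*+2}$ is exactly what the definition of $P_{r^*}$ supplies.

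The gap is in your estimate of the Duhamel piece. Bounding $|\widehat{u\cdot\nabla\theta}(\xi,s)|\le\|u\cdot\nabla\theta(s)\|_{L^1}\lesssim\|\theta(s)\|_{L^2}\|\nabla\theta(s)\|_{L^2}$ fails on two counts. First, it is circular: the decay of $\|\nabla\theta\|_{L^2}$ is not provided by Lemma \ref{lem4.2}; it is the $\kappa=1$ case of the very lemma you are proving. Second, and decisively, it discards the factor of $|\xi|$ coming from the divergence structure: since $\nabla\cdot u=0$, one writes $u\cdot\nabla\theta=\nabla\cdot(u\theta)$, whence $|\widehat{u\cdot\nabla\theta}(\xi,s)|\le|\xi|\,\|u\theta(s)\|_{L^1}\le|\xi|\,\|\theta(s)\|_{L^2}^2$ using $\|u\|_{L^2}=\|\theta\|_{L^2}$ (Lemma \ref{Zhou}). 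With that factor the nonlinear contribution to the low-frequency ball is $\rho^{2\kappa+4}\bigl(\int_0^t\|\theta\|_{L^2}^2\,ds\bigr)^2$, which is subordinate to the linear piece in both regimes; without it the best you can extract is $\rho^{2\kappa+2}\bigl(\int_0^t\cdots\bigr)^2\gtrsim(1+t)^{-(\kappa+1)/\alpha}$, which is strictly slower than the target $(1+t)^{-(\kappa+1+r^*)/\alpha}$ whenever $r^*>0$ and slower than $(1+t)^{-(\kappa+2-\alpha)/\alpha}$ for every $\alpha<1$, so the splitting argument does not close. The divergence-form bound is precisely the one recorded in Lemma \ref{lem4.1} (see the $|\xi|^{4\alpha P+2}\bigl[\int_0^t\|\theta\|_{L^2}^2\,ds\bigr]^2$ term in (\ref{4-1})) and used throughout Section 3; substituting it for your $L^1$ bound repairs the proof.
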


In this paper, we continue to study the decay characterization of solutions to system (\ref{1-1})  with $\frac12<\alpha<1$. On the basis of the  expanded properties (see Lemma \ref{lem2.3}) of decay character $r^*$   and Fourier splitting method, we establish the temporal decay rate of higher-order derivative of solutions. Our result can be described as follows:
\begin{theorem}
\label{thm1.2}
Let $\frac12<\alpha<1$, $\alpha\leq s$ and $\theta_0\in H^{2\alpha P+  \kappa}(\mathbb{R}^2)$ $(P\in \mathbb{N},~{\kappa}\geq0)$ have the decay character $r^*=r^*(\theta)\in(-1,\infty)$. Suppose that $\theta(x,t)$ is the solution to system (\ref{1-1}). Then
\begin{itemize}
\item  if $r^*\leq1-\alpha$,
\begin{equation}\label{main-1}
\|\partial_t^P\Lambda^{\kappa}\theta(t)\|_{L^2}^2\leq C(1+t)^{-\frac1{\alpha}({\kappa}+1+r^*+2\alpha P) },\quad\hbox{for~large}~t;
\end{equation}

\item  if $r^*\geq1-\alpha$,
\begin{equation}\label{main-2}
\|\partial_t^P\Lambda^{\kappa}\theta(t)\|_{L^2}^2\leq C(1+t)^{-\frac1{\alpha}({\kappa}+2-\alpha+2\alpha P) },\quad\hbox{for~large}~t.
\end{equation}
\end{itemize}

\end{theorem}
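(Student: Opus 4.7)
The strategy is induction on $P$, with the base case $P=0$ (for every admissible $\kappa\geq 0$) supplied directly by Lemma \ref{lem4.2-1}. Suppose that \eqref{main-1}--\eqref{main-2} have been established at level $P-1$ for arbitrary $\kappa\geq 0$; the goal of the inductive step is to promote them to level $P$.

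For the inductive step I would apply $\partial_t^P\Lambda^{\kappa}$ to equation \eqref{1-1} and pair the result with $\partial_t^P\Lambda^{\kappa}\theta$ in $L^2$ to obtain the identity
\[\frac{1}{2}\frac{d}{dt}\|\partial_t^P\Lambda^{\kappa}\theta\|_{L^2}^2+\|\partial_t^P\Lambda^{\kappa+\alpha}\theta\|_{L^2}^2=-\int_{\mathbb{R}^2}\partial_t^P\Lambda^{\kappa}(u\cdot\nabla\theta)\,\partial_t^P\Lambda^{\kappa}\theta\,dx.\]
Then I would introduce the Fourier splitting ball $B(t)=\{\xi:|\xi|^{2\alpha}\leq g(t)/(1+t)\}$, choosing $g(t)$ proportional to $\kappa+1+r^*+2\alpha P$ in the subcritical regime $r^*\leq 1-\alpha$ and to $\kappa+2-\alpha+2\alpha P$ in the complementary case, so that the dissipation term bounds below $\|\partial_t^P\Lambda^{\kappa}\theta\|_{L^2}^2$ modulo a low-frequency remainder. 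On $B(t)$, iterating the Fourier identity $\widehat{\partial_t\theta}=-|\xi|^{2\alpha}\hat\theta-\widehat{u\cdot\nabla\theta}$ yields a representation of the form
\[\widehat{\partial_t^P\Lambda^{\kappa}\theta}=(-1)^P|\xi|^{\kappa+2\alpha P}\hat\theta+R_P(\xi,t),\]
where $R_P$ is a linear combination, with coefficients that are powers of $|\xi|^{2\alpha}$, of Fourier transforms of lower-order time derivatives of $u\cdot\nabla\theta$. The leading piece, combined with the definition of the decay character and the expanded properties recorded in Lemma \ref{lem2.3}, converts the low-frequency integral of $|\xi|^{2\kappa+4\alpha P}|\hat\theta|^2$ into the target power of $(1+t)$. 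A Gronwall-type integration of the resulting differential inequality then produces the rates stated in \eqref{main-1} and \eqref{main-2}.

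The principal obstacle is the control of the nonlinear contributions: the source term $\int\partial_t^P\Lambda^{\kappa}(u\cdot\nabla\theta)\,\partial_t^P\Lambda^{\kappa}\theta\,dx$ in the differential inequality, and the remainder $R_P$ in the low-frequency representation. Expanding these by the Leibniz rule and invoking the Riesz representation $u=\nabla^{\perp}\Lambda^{-1}\theta$, so that $\partial_t^m u$ inherits the same $L^2$ decay and decay character as $\partial_t^m\theta$, reduces every factor to an object of the form $\|\partial_t^a\Lambda^b\theta\|_{L^2}$ with $a<P$, to which the inductive hypothesis applies. A Kato--Ponce-type fractional Leibniz estimate in $H^{\kappa}$, together with the regularity assumption $\theta_0\in H^{2\alpha P+\kappa}$ and the restriction $\frac{1}{2}<\alpha<1$, then ensures that the product of decay rates in each nonlinear summand strictly exceeds the target exponent, so that all nonlinear pieces can be absorbed and the induction closes.
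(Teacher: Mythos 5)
Your overall strategy coincides with the paper's: induction on the number of time derivatives, an $L^2$ energy estimate for $\partial_t^P\Lambda^{\kappa}\theta$, Fourier splitting on a shrinking ball, and an iterated Fourier-space representation of $\partial_t^P\hat\theta$ (the paper's Lemma \ref{lem4.1}) whose leading linear piece is handled through the decay character via Lemma \ref{lem2.3}. So the architecture is right. But there is a genuine gap in the step where you claim the Leibniz expansion of $\partial_t^P\Lambda^{\kappa}(u\cdot\nabla\theta)$ ``reduces every factor to an object of the form $\|\partial_t^a\Lambda^b\theta\|_{L^2}$ with $a<P$, to which the inductive hypothesis applies.'' That is false for the two extreme terms of the expansion: $u\cdot\nabla\partial_t^{P}\Lambda^{\kappa}\theta$ and $\partial_t^{P}u\cdot\nabla\Lambda^{\kappa}\theta$ both carry $P$ time derivatives and are \emph{not} covered by the inductive hypothesis. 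In the paper these terms produce contributions of the schematic form $\|\Lambda^{2-2\alpha}\theta\|_{L^2}\,\|\partial_t^{P}\Lambda^{\kappa+\alpha}\theta\|_{L^2}^2$, which contain the full top-order dissipation quantity; they can only be absorbed into the left-hand side because $\|\Lambda^{2-2\alpha}\theta(t)\|_{L^2}\to 0$ as $t\to\infty$ (this is exactly where the restriction to ``large $t$'' and the hypothesis $\tfrac12<\alpha<1$ enter). Your sketch never invokes this smallness-for-large-time absorption, and without it the differential inequality does not close.

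A second, related omission: even after absorbing the top-order piece, the estimates for $\|\partial_t^{P}\Lambda^{\kappa}\theta\|_{L^2}$ with $\kappa>0$ still involve the quantity $\|\partial_t^{P}\theta\|_{L^2}$ (i.e.\ $a=P$, $b=0$), again outside the inductive hypothesis on $P-1$. The paper resolves this by splitting the inductive step into two stages (Lemma \ref{lem4.5} for $\kappa=0$ at level $P$, then Lemma \ref{lem4.6} for $\kappa>0$ at level $P$, using the freshly proved $\kappa=0$ bound together with a Young-inequality interpolation in the exponent $\tfrac{4\kappa+4\alpha}{2\kappa+2\alpha-1}$). Your single-pass induction over all $\kappa$ simultaneously does not account for this ordering, so as written the induction does not close. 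Both defects are repairable — they are exactly what the paper's Lemmas \ref{lem4.3}--\ref{lem4.6} are engineered to handle — but they are the substantive content of the proof rather than routine bookkeeping.
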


The main purpose of this paper is to prove Theorem \ref{thm1.2}, which can be proved by inductive argument:
\begin{itemize}
\item[(1).] Firstly, we prove Theorem \ref{thm1.2} holds for $P=1$ and $\kappa\geq0$;
\item[(2).] Suppose that Theorem \ref{thm1.2} holds for $P\leq p-1$ and $\kappa\geq0$, then we show that it also holds for $P=p$ and $\kappa\geq0$:
\begin{itemize}
\item we show that it holds for $P=p$ and $\kappa=0$;
\item we show that it holds for $P=p$ and $\kappa>0$.
\end{itemize}
\end{itemize}

The rest of this paper is organized as follows. We begin by giving some preliminary results on the properties of  decay character $r^*$ in Section 2. Then, in Section 3, we prove  Theorem \ref{thm1.2}.
\section{Preliminary results and properties of Decay character}
\subsection{Preliminary results}
Denote the Riesz transform in $\mathbb{R}^2$ by $\mathcal{R}_j$, $j=1,2$ as
$$
\widehat{\mathbb{R}_jf}=-i\frac{\xi_j}{|\xi|}\hat{f}(\xi).
$$
The operator $\mathcal{R}^{\perp}$ is defined by
$$
\mathcal{R}^{\perp}f=(-\partial_{x_2}\Lambda^{-1}f, \partial_{x_1}\Lambda^{-1}f)=-(\mathcal{R}_2f,\mathcal{R}_1f),
$$
which implies that the relation between $u$ and $\theta$ is $u=\mathcal{R}^{\perp}\theta$.

\begin{lemma}[\cite{Zhou2}]
\label{Zhou}
There exists a positive constant $C(p)$ depending only on $p$ such that
$$
\|\Lambda^{\delta}u\|_{L^p}\leq C(p)\|\Lambda^{\delta}\theta\|_{L^p},
$$
for all $\delta\geq0$, $1<p<\infty$. If $p=2$, the above inequality is actually an identity.
\end{lemma}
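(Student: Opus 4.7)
The plan is to reduce the bound to the $L^p$-boundedness of the Riesz transforms. Recall from the discussion just before the statement that $u = \mathcal{R}^\perp \theta = -(\mathcal{R}_2 \theta,\mathcal{R}_1 \theta)$, so each component of $u$ is, up to sign, a Riesz transform of $\theta$. Both $\Lambda^\delta$ and $\mathcal{R}_j$ are Fourier multipliers (with symbols $|\xi|^\delta$ and $-i\xi_j/|\xi|$ respectively), hence they commute on the Schwartz class; extending by density gives $\Lambda^\delta u_1 = -\mathcal{R}_2 \Lambda^\delta \theta$ and $\Lambda^\delta u_2 = -\mathcal{R}_1 \Lambda^\delta \theta$ whenever $\Lambda^\delta \theta \in L^p$.

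For the case $1<p<\infty$ I would then invoke the classical Calder\'on--Zygmund theorem, which yields $\|\mathcal{R}_j f\|_{L^p} \le C(p)\|f\|_{L^p}$ with $C(p)$ depending only on $p$. Applied to $f = \Lambda^\delta\theta$ componentwise and combined by a triangle inequality (or by taking the Euclidean norm in $\mathbb{R}^2$), this immediately produces the desired estimate $\|\Lambda^\delta u\|_{L^p} \le C(p)\|\Lambda^\delta\theta\|_{L^p}$.

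For the endpoint $p=2$ I would invoke Plancherel's theorem and the explicit form of the symbol. A short calculation gives $\widehat{u_1}(\xi) = i\xi_2|\xi|^{-1}\hat\theta(\xi)$ and $\widehat{u_2}(\xi) = i\xi_1|\xi|^{-1}\hat\theta(\xi)$, hence
\[
|\widehat{u}(\xi)|^2 = \frac{\xi_1^2+\xi_2^2}{|\xi|^2}|\hat\theta(\xi)|^2 = |\hat\theta(\xi)|^2.
\]
Multiplying by $|\xi|^{2\delta}$ and integrating turns the pointwise identity into $\|\Lambda^\delta u\|_{L^2} = \|\Lambda^\delta \theta\|_{L^2}$, giving the claimed equality.

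There is no genuine obstacle here: the statement is essentially a repackaging of the fact that the Riesz transforms are bounded on $L^p$ for $1<p<\infty$ and isometries on $L^2$ (after combining the two components). The only point requiring a moment's care is justifying the commutation of $\Lambda^\delta$ with $\mathcal{R}_j$ on the relevant function spaces, but this is standard multiplier calculus and poses no real difficulty for $\delta\ge 0$.
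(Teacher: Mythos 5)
Your proof is correct: the paper itself gives no argument for this lemma (it is quoted from Zhou's paper \cite{Zhou2}), and the standard proof underlying that citation is exactly what you describe, namely writing $u$ componentwise as Riesz transforms of $\theta$, commuting the Fourier multipliers $\Lambda^{\delta}$ and $\mathcal{R}_j$, and invoking Calder\'on--Zygmund boundedness for $1<p<\infty$ together with Plancherel and the identity $(\xi_1^2+\xi_2^2)/|\xi|^2=1$ for the $p=2$ isometry. (Minor note: with the paper's sign conventions one gets $\widehat{u_2}=-i\xi_1|\xi|^{-1}\hat\theta$ rather than $+i\xi_1|\xi|^{-1}\hat\theta$, but this has no effect on any of the norms.)
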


It is easy to see that the following result also holds for system (\ref{1-1}):
\begin{corollary}
\label{Cor}
There exists a positive constant $C(p)$ depending only on $p$  such that
$$
\|\partial_t^{\iota}\Lambda^{\delta}u\|_{L^p}\leq C(p)\|\partial_t^{\iota}\Lambda^{\delta}\theta\|_{L^p},
$$
for all $\iota\in\mathbb{N}$, $\delta\geq0$, $1<p<\infty$.
\end{corollary}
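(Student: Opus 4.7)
The plan is to reduce this corollary to the already-stated Lemma \ref{Zhou} by exploiting the fact that the operator relating $u$ to $\theta$ commutes with both time differentiation and the spatial operator $\Lambda^{\delta}$. As noted immediately above the corollary, $u = \mathcal{R}^{\perp}\theta = -(\mathcal{R}_2\theta,\mathcal{R}_1\theta)$, where $\mathcal{R}_j$ is the Riesz transform with symbol $-i\xi_j/|\xi|$. The entire content of the corollary is that the Riesz-transform bound underlying Lemma \ref{Zhou} is insensitive to further spatial or temporal differentiation.

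First I would observe that $\mathcal{R}^{\perp}$ is a purely spatial Fourier multiplier and therefore commutes with $\partial_t$. Differentiating the pointwise-in-time identity $u(\cdot,t) = \mathcal{R}^{\perp}\theta(\cdot,t)$ in $t$ exactly $\iota$ times gives
$$
\partial_t^{\iota} u = \mathcal{R}^{\perp}\partial_t^{\iota}\theta.
$$
Thus the pair $(\partial_t^{\iota}\theta,\partial_t^{\iota} u)$ stands in exactly the same Riesz-transform relation as the pair $(\theta,u)$ for which Lemma \ref{Zhou} was stated.

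Next I would apply Lemma \ref{Zhou} with $\theta$ replaced by $\partial_t^{\iota}\theta$ and $u$ replaced by $\partial_t^{\iota}u$; nothing in the proof of that lemma used anything about the quasi-geostrophic dynamics beyond the algebraic identity $u=\mathcal{R}^{\perp}\theta$, so the substitution is legitimate. This yields
$$
\|\Lambda^{\delta}\partial_t^{\iota} u\|_{L^p} \leq C(p)\|\Lambda^{\delta}\partial_t^{\iota}\theta\|_{L^p}.
$$
Since $\Lambda^{\delta}$ is also a spatial Fourier multiplier, it commutes with $\partial_t^{\iota}$, and the inequality is precisely the claim.

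There is essentially no deep obstacle here; the only point requiring care is the legitimacy of the commutation of $\partial_t^{\iota}$ through the spatial operators $\mathcal{R}^{\perp}$ and $\Lambda^{\delta}$, which in turn requires that $\partial_t^{\iota}\theta$ belong to a suitable function space. This is ensured in the setting in which the corollary is used, since Theorem \ref{thm1.2} assumes $\theta_0\in H^{2\alpha P+\kappa}(\mathbb{R}^2)$, providing sufficient regularity (via a standard mollification/approximation argument if needed) to justify the interchange of derivatives with Fourier multipliers.
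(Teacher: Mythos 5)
Your argument is correct and is precisely the reasoning the paper intends: the paper offers no written proof (it simply asserts the corollary is "easy to see" from Lemma \ref{Zhou}), and the natural justification is exactly your observation that $u=\mathcal{R}^{\perp}\theta$ is a time-independent spatial Fourier multiplier relation, so $\partial_t^{\iota}u=\mathcal{R}^{\perp}\partial_t^{\iota}\theta$ and Lemma \ref{Zhou} applies to the differentiated pair. Nothing further is needed.
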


\subsection{Definition and properties of decay character}

 The definitions of decay indicator $P_r(u_0)$ and decay character $r^*$ was  introduced in \cite{ADE,B}.
\begin{definition}[\cite{ADE,B}]
\label{def2.1}
Suppose that $v_0\in L^2(\mathbb{R}^n)$, $B_{\rho}=\{\xi\in\mathbb{R}^n:|\xi|\leq\rho\}$ and $\Lambda=(-\Delta)^{\frac12}$. For $s\geq0$, let the limit
$$
P_r^s(v_0)=\lim_{\rho\rightarrow0}\rho^{-2r-n}\int_{B(\rho}|\xi|^{2s}|\hat{v}_0(\xi)|^2d\xi,
$$
holds for $-\frac n2+s<r<\infty$. Then,   $P_r^s(v_0)$ is called the $s$-decay indicator corresponding to $\Lambda^s v_0$.
\end{definition}
\begin{definition}[\cite{ADE,B}]
\label{def2.2}
The decay character of $\Lambda^sv_0$ denoted by $r_s^*=r_s^*(v_0)$, is the unique $r\in(-\frac n2+s,\infty)$ such that $0<P_r^s(v_0)<\infty$, provided that this number exists. If such $P_r^s(v_0)$ does not exist, we can set $r_s^*=-\frac n2+s$ when $P_r^s(v_0)=\infty$ for all $r\in(-\frac n2+s,\infty)$ or $r_s^*=\infty$ if $P_r^s(v_0)=0$ for all $r\in(-\frac n2+s,\infty)$.
\end{definition}


There's also a lemma on the relation between the decay character of $\Lambda^sv_0$ and that of $v_0$.
\begin{lemma}[\cite{NS}]
Let $v_0\in H^s(\mathbb{R}^n)$, with $s>0$. Then
\begin{itemize}
\item[(1)] if $-\frac n2<r^*(v_0)<\infty$, then $-\frac n2+s<r_s^*(v_0)<\infty$ and $r_s^*(v_0)=s+r^*(v_0)$;
\item[(2)] $r_s^*(v_0)=\infty$ if and only if $r^*(v_0)=\infty$;
\item[(3)] $r^*(v_0)=-\frac n2$ if and only if $r_s^*(v_0)=-\frac n2+s$.
\end{itemize}
\label{HS}
\end{lemma}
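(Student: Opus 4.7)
The plan is to exploit a Stieltjes-type identity, obtainable via Fubini or radial integration by parts, linking the two radial tails
\[
I(\rho)=\int_{B_\rho}|\hat v_0(\xi)|^2\,d\xi,\qquad J(\rho)=\int_{B_\rho}|\xi|^{2s}|\hat v_0(\xi)|^2\,d\xi,
\]
namely
\[
J(\rho)=\rho^{2s}I(\rho)-2s\int_0^\rho\tau^{2s-1}I(\tau)\,d\tau.\qquad(\star)
\]
This identity transfers asymptotic information about $I$ near $\rho=0$ into analogous information about $J$; a second integration by parts inverts the relation. I would prove the three parts of the lemma by applying $(\star)$ in the three possible asymptotic regimes for $I$, and then close (3) by exhaustion.

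\textbf{Proof of (1).} Under the hypothesis $r^*(v_0)=r\in(-n/2,\infty)$ one has $\rho^{-2r-n}I(\rho)\to P=P_r(v_0)\in(0,\infty)$. Substituting into $(\star)$ and changing variables $\tau=\rho u$ gives
\[
\rho^{-2(s+r)-n}J(\rho)=\rho^{-2r-n}I(\rho)-2s\int_0^1 u^{2s-1}\bigl[\rho^{-2r-n}I(\rho u)\bigr]\,du.
\]
Since $u^{2s-1}\cdot u^{2r+n}$ is integrable on $[0,1]$ precisely because $2r+n>0$, dominated convergence would yield
\[
\lim_{\rho\to0}\rho^{-2(s+r)-n}J(\rho)=P\Bigl(1-\frac{2s}{2s+2r+n}\Bigr)=P\cdot\frac{2r+n}{2s+2r+n}\in(0,\infty),
\]
so $P_{s+r}^{s}(v_0)\in(0,\infty)$ and $r_s^*(v_0)=s+r$.

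\textbf{Proof of (2).} The forward direction follows by the same calculation: $I(\tau)=o(\tau^{2r+n})$ for every $r>-n/2$ propagates through $(\star)$ to $J(\rho)=o(\rho^{2(s+r)+n})$ for every such $r$, hence $r_s^*=\infty$. For the converse I would use a dyadic decomposition,
\[
I(\rho)=\sum_{k\geq0}\int_{B_{2^{-k}\rho}\setminus B_{2^{-(k+1)}\rho}}|\hat v_0|^2\,d\xi\leq\sum_{k\geq0}\bigl(2^{-(k+1)}\rho\bigr)^{-2s}J(2^{-k}\rho),
\]
and the hypothesis $J(\eta)=o(\eta^{2(s+r)+n})$ then makes the geometric series converge with total $o(\rho^{2r+n})$ for every $r>-n/2$, giving $r^*(v_0)=\infty$.

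\textbf{Proof of (3) and main obstacle.} The hardest part will be (3). The implication $r_s^*=-n/2+s\Rightarrow r^*=-n/2$ follows by contrapositive, since $r^*\in(-n/2,\infty)$ would violate (1) and $r^*=\infty$ would violate the forward direction of (2). For the converse $r^*=-n/2\Rightarrow r_s^*=-n/2+s$, (2) rules out $r_s^*=\infty$, but excluding $r_s^*\in(-n/2+s,\infty)$ requires a \emph{reverse} form of (1): if $r_s^*(v_0)=r'\in(-n/2+s,\infty)$, then $r^*(v_0)=r'-s$. I would prove this by a second Stieltjes IBP to invert $(\star)$,
\[
I(\rho)=\rho^{-2s}J(\rho)+2s\int_0^\rho\tau^{-2s-1}J(\tau)\,d\tau,
\]
whose boundary term at $\tau=0$ vanishes precisely because $r_s^*>-n/2+s$ forces $J(\tau)=o(\tau^{2s})$. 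The main technical subtlety throughout is justifying dominated convergence and the vanishing of IBP boundary terms; once these are in place, the algebraic manipulations are routine.
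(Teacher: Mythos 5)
Your argument is correct, but note first that the paper itself gives no proof of this lemma: it is imported verbatim from Niche--Schonbek \cite{NS}, so the comparison is with their proof rather than with anything in this text. Your route is genuinely different. The proof in \cite{NS} compares the two decay indicators directly: the upper bound comes from $|\xi|^{2s}\le\rho^{2s}$ on $B_\rho$, and the lower bound from restricting to the annulus $B_\rho\setminus B_{\rho/2}$, which yields $0<\liminf\le\limsup<\infty$ for $\rho^{-2(r+s)-n}J(\rho)$ but does not by itself show that the defining limit of $P^s_{s+r}(v_0)$ exists. Your Stieltjes/Fubini identity $(\star)$ (which is correct --- both terms are nonnegative, so Tonelli applies, and the dominating function $Cu^{2s+2r+n-1}$ is integrable exactly because $2r+n>0$ and $s>0$) does more: it produces the exact limit $P^s_{s+r}(v_0)=\frac{2r+n}{2(s+r)+n}\,P_r(v_0)$, hence in particular establishes that the limit exists, and the inverted identity gives the genuine converse of (1) ($r_s^*=r'\in(-\frac n2+s,\infty)\Rightarrow r^*=r'-s$), which is what makes your exhaustion in (3) clean. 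The dyadic-annulus bound for the converse of (2) is also sound, the geometric series converging precisely when $2r+n>0$. The one caveat you should state explicitly is the same one the paper flags in its Remark following Definition \ref{def2.2}: the exhaustion step in (3) ("not finite and not $\infty$, hence $-\frac n2$", and symmetrically) presupposes that $r^*(v_0)$ and $r_s^*(v_0)$ actually fall into one of the three admissible cases, i.e.\ that the decay characters exist; for pathological data the trichotomy fails and the lemma must be read as conditional on existence. This assumption is shared with the original proof in \cite{NS}, so it is a caveat to record, not a gap that distinguishes your argument from theirs.
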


\begin{remark}[\cite{A,Z}]The decay character $r^*=r^*(v_0)$  measure the ``order" of $\widehat{v}_0(\xi)$ at $\xi=0$ in frequency space. The theory of \cite{ADE,NS} allows to define the decay character only in the following three situations:
\begin{itemize}
\item[(1)] Either, $\exists r\in(-\frac n2+s,+\infty)$ such that $0<P_r^s(u_0)<+\infty$, and in this case such $r$ is unique,
\item[(2)] Or $\forall r\in(-\frac n2+s,+\infty)$, one has $P_r^s(u_0)=0$,
\item[(3)] Or $\forall r\in(-\frac n2+s,+\infty)$, one has $P_r^s(u_0)=+\infty$.
\end{itemize}
But not in the other cases (e. g. it can   happen that $$
\exists r,r'\in(-\frac n2+s,+\infty)~\hbox{such~that}~P_r^s(u_0)=0~\hbox{and}~P_{r'}^s(u_0)=+\infty.
$$
In addition, it can also happen that the limit defining $P_r^s(u_0)$ does not exist). 
\end{remark}
\subsection{Decay characterization of  the linear system}

Let $0<\alpha\leq1$. Consider the solutions of the linear problem in $ \mathbb{R}^n$ ($n\in\mathbb{N}^+$):
 \begin{equation}\label{2-1}
 \left\{ \begin{aligned}
         & v_t +\Lambda^{2\alpha} v=0, \\
                  &v(x,0)=v_0(x).
                           \end{aligned} \right.
                           \end{equation}
 The solution of system (\ref{2-1}) can be represented by the fundamental solution as
 $$
 v(t)=e^{-t\Lambda^{2\alpha}}v_0=G_{\alpha}*v_0,
 $$
 where $G_{\alpha}$ is given from the Fourier transform as
 $$
 \widehat{G}_{\alpha}(\xi,t)=e^{-|\xi|^{2\alpha}t}.
 $$
   Moreover,
simple calculation shows that
$$
\frac12\frac d{dt}\|v(t)\|_{L^2}^2\leq-C\int_{\mathbb{R}^n}|\xi|^{2\alpha}|\hat{v}|^2d\xi.
$$
The $L^2$-decay characterization of solutions to system (\ref{2-1}) was established by Niche and Schonbek \cite{NS} by using Fourier splitting method.
 \begin{lemma}[\cite{NS}]
 \label{lem2.1}
 Assume that
  $v_0\in L^2(\mathbb{R}^n)$, which has decay character $r^*(v_0)=r^*$, is a solution to system (\ref{2-1}). Then
 \begin{itemize}
 \item[(1).] If $-\frac n2<r^*<+\infty$, there exists two positive constants $C_1,~C_2>0$, such that
 $$
 C_1(1+t)^{-\frac1{\alpha}(\frac n2+r^*)}\leq\|v(t)\|_{L^2}^2 \leq C_2(1+t)^{-\frac1{\alpha}(\frac n2+r^*)};
 $$
 \item[(2).] if $r^*=-\frac n2$, there exists $C=C(\varepsilon)>0$, such that
 $$
 \|v(t)\|_{L^2}^2 \geq C(1+t)^{-\varepsilon},\quad\forall \varepsilon>0,
 $$
 which means the decay of $\|v\|_{L^2}^2 $ is slower than any uniform algebraic rate;
 \item[(3).] if $r^*=+\infty$, there exists a $C>0$ such that
 $$
 \|v(t)\|_{L^2}^2 \leq C(1+t)^{-m},\quad \forall m>0,
 $$
 that is, the decay of $\|v\|_{L^2}^2 $
  is faster than any algebraic rate.
  \end{itemize}
  \end{lemma}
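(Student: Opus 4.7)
The plan is to exploit the fact that the linear problem (\ref{2-1}) admits the explicit Fourier representation $\widehat{v}(\xi,t)=e^{-|\xi|^{2\alpha}t}\widehat{v}_0(\xi)$, so by Plancherel's identity
$$
\|v(t)\|_{L^2}^2=\int_{\mathbb{R}^n}e^{-2|\xi|^{2\alpha}t}|\widehat{v}_0(\xi)|^2\,d\xi.
$$
With this formula in hand, all three statements reduce to controlling how the mass of $|\widehat{v}_0|^2$ concentrates near $\xi=0$, which is exactly what the decay character $r^{*}$ measures via the indicator $P_r^0$. I would prove the upper bound by the Fourier splitting method and the lower bound by direct integration on a shrinking ball of frequencies.

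For the upper bound in case (1), the starting point is the energy identity $\tfrac12\tfrac{d}{dt}\|v\|_{L^2}^2=-\int|\xi|^{2\alpha}|\widehat{v}|^2\,d\xi$. Splitting the integral at radius $\rho(t)$ and discarding the nonpositive interior piece gives
$$
\frac{d}{dt}\|v(t)\|_{L^2}^2\le -2\rho(t)^{2\alpha}\|v(t)\|_{L^2}^2+2\rho(t)^{2\alpha}\int_{B_{\rho(t)}}|\widehat{v}_0(\xi)|^2\,d\xi,
$$
since $|\widehat{v}(\xi,t)|\le|\widehat{v}_0(\xi)|$. By Definition \ref{def2.1} with $s=0$, finiteness of $P_{r^{*}}^{0}(v_0)$ yields $\int_{B_\rho}|\widehat{v}_0|^2\,d\xi\le C\rho^{2r^{*}+n}$ for $\rho$ small. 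Choosing $\rho(t)^{2\alpha}=K(1+t)^{-1}$ with $K$ sufficiently large and multiplying by the integrating factor $(1+t)^{2K}$ turns the differential inequality into
$$
\frac{d}{dt}\bigl[(1+t)^{2K}\|v\|_{L^2}^2\bigr]\le C(1+t)^{2K-1-(n/2+r^{*})/\alpha},
$$
and integrating in time gives the claimed bound $\|v(t)\|_{L^2}^2\le C_2(1+t)^{-(n/2+r^{*})/\alpha}$.

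For the matching lower bound I would set $\rho(t)=(1+t)^{-1/(2\alpha)}$ and discard everything outside $B_{\rho(t)}$:
$$
\|v(t)\|_{L^2}^2\ge\int_{B_{\rho(t)}}e^{-2|\xi|^{2\alpha}t}|\widehat{v}_0(\xi)|^2\,d\xi\ge e^{-2}\int_{B_{\rho(t)}}|\widehat{v}_0(\xi)|^2\,d\xi.
$$
Since $P_{r^{*}}^{0}(v_0)>0$, for all sufficiently small $\rho$ one has $\int_{B_\rho}|\widehat{v}_0|^2\,d\xi\ge c\rho^{2r^{*}+n}$, which produces the lower constant $C_1$. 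Case (2) follows by the same calculation: $r^{*}=-n/2$ means $P_r^0(v_0)=+\infty$ for every $r>-n/2$, so for any $\varepsilon>0$ pick $r$ with $(n/2+r)/\alpha<\varepsilon$ and reuse the lower bound argument to conclude slower-than-polynomial decay. Case (3) follows from the upper-bound Fourier splitting: $r^{*}=+\infty$ says $P_r^0(v_0)=0$ for every $r$, so the running-in-$r$ version of the differential inequality above gives decay faster than any prescribed algebraic rate $(1+t)^{-m}$.

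The technical point that needs the most care is the interface between the pointwise limit used to define $P_r^0$ and the uniform estimates needed in the Fourier splitting step: I must convert the statement that a limit equals a certain value as $\rho\to0$ into a two-sided inequality $c\rho^{2r^{*}+n}\le\int_{B_\rho}|\widehat{v}_0|^2\,d\xi\le C\rho^{2r^{*}+n}$ valid on an explicit interval $0<\rho\le\rho_0$, and then track how the asymptotic regime $t\to\infty$ guarantees that the chosen $\rho(t)$ lies inside this interval. The edge cases (2) and (3) also require a slightly more delicate bookkeeping, since one has to quantify the $\varepsilon$-uniform lower bound in (2) and the $m$-dependence of the integrating factor $K=K(m)$ in (3), but both reductions follow the same mechanism as the generic case once the small-$\rho$ estimates for $\int_{B_\rho}|\widehat{v}_0|^2\,d\xi$ are formulated quantitatively.
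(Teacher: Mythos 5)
Your argument is correct and is exactly the Fourier splitting plus decay-character mechanism that the paper relies on: the paper imports Lemma \ref{lem2.1} from \cite{NS} without reproving it, but the proof it does give for the analogous Lemma \ref{lem2.3} proceeds identically (Plancherel, splitting at a shrinking frequency ball, the two-sided bound $c\rho^{2r^*+n}\le\int_{B_\rho}|\widehat{v}_0|^2\,d\xi\le C\rho^{2r^*+n}$ from $0<P_{r^*}(v_0)<\infty$, an algebraic weight $g(t)$ for the upper bound, and direct integration over $B_{\rho(t)}$ with $\rho(t)\sim(1+t)^{-1/(2\alpha)}$ for the lower bound). Your handling of the edge cases (2) and (3) by letting $r$ range over $(-\tfrac n2,\infty)$ is also the standard treatment and is sound.
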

 There's also an lemma on the $\dot{H}^s$-decay rate of solutions to system (\ref{2-1}).
\begin{lemma}[\cite{NS}]
Suppose that   $\Lambda^sv_0\in L^2(\mathbb{R}^n)$ ($s>0$) has decay character $r_s^*=r_s^*(v_0)$. Then
 \begin{itemize}
 \item[(1)] If $-\frac n2\leq r^*<+\infty$, there exists two positive constants $C_1,~C_2>0$, such that
 $$
 C_1(1+t)^{-\frac1{\alpha}(\frac n2+r^*+s)}\leq\|\Lambda^sv(t)\|_{L^2}^2 \leq C_2(1+t)^{-\frac1{\alpha}(\frac n2+r^*+s)};
 $$
 \item[(2)]   if $r^*=+\infty$, there exists a $C>0$ such that
 $$
 \|v(t)\|_{L^2}^2 \leq C(1+t)^{-m},\quad \forall m>0,
 $$
 that is, the decay of $\|\Lambda^sv\|_{L^2}^2 $
  is faster than any algebraic rate.
  \end{itemize}
  \label{lem2.2}
\end{lemma}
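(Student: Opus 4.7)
The plan is to exploit the explicit Fourier representation of the solution of the linear problem \eqref{2-1}, namely $\hat v(\xi,t)=e^{-|\xi|^{2\alpha}t}\hat v_0(\xi)$, and combine Plancherel's identity with the Fourier splitting method, adapting the proof of Lemma \ref{lem2.1} to the homogeneous Sobolev setting. By Plancherel,
\begin{equation*}
\|\Lambda^s v(t)\|_{L^2}^2=\int_{\mathbb{R}^n}|\xi|^{2s}e^{-2|\xi|^{2\alpha}t}|\hat v_0(\xi)|^2\,d\xi,
\end{equation*}
and differentiating in time yields the energy-type identity
\begin{equation*}
\tfrac{d}{dt}\|\Lambda^s v(t)\|_{L^2}^2=-2\int_{\mathbb{R}^n}|\xi|^{2\alpha}\bigl(|\xi|^{2s}e^{-2|\xi|^{2\alpha}t}|\hat v_0(\xi)|^2\bigr)\,d\xi.
\end{equation*}

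To obtain the upper bound in case (1), I would split the frequency space at a ball $B(\rho(t))$ of radius $\rho(t)=(e+t)^{-1/(2\alpha)}$. Using $|\xi|^{2\alpha}\ge \rho(t)^{2\alpha}$ outside $B(\rho(t))$ yields the Fourier-splitting differential inequality
\begin{equation*}
\tfrac{d}{dt}\|\Lambda^s v(t)\|_{L^2}^2+2\rho(t)^{2\alpha}\|\Lambda^s v(t)\|_{L^2}^2\le 2\rho(t)^{2\alpha}\int_{B(\rho(t))}|\xi|^{2s}|\hat v_0(\xi)|^2\,d\xi.
\end{equation*}
By Definitions \ref{def2.1}--\ref{def2.2} applied to the decay character $r_s^*$ of $\Lambda^s v_0$, one has the asymptotic estimate $\int_{B(\rho)}|\xi|^{2s}|\hat v_0|^2\,d\xi\le C\rho^{\,2r_s^*+n}$ for small $\rho$. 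Invoking Lemma \ref{HS} to write $r_s^*=s+r^*$, the right-hand side is bounded by $C(e+t)^{-1}(e+t)^{-\frac{1}{\alpha}(\frac n2+s+r^*)}$. A standard integrating-factor argument with the multiplier whose logarithmic derivative matches $2\rho(t)^{2\alpha}=2(e+t)^{-1}$ then delivers the advertised upper bound.

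For the matching lower bound, I would drop everything except the low frequencies in the Plancherel identity: choosing $\rho(t)=(1+t)^{-1/(2\alpha)}$ so that $2|\xi|^{2\alpha}t\le 2$ on $B(\rho(t))$,
\begin{equation*}
\|\Lambda^s v(t)\|_{L^2}^2\ge e^{-2}\int_{B(\rho(t))}|\xi|^{2s}|\hat v_0(\xi)|^2\,d\xi \ge c\,\rho(t)^{\,2r_s^*+n},
\end{equation*}
where the second inequality uses the positivity side $P_{r_s^*}^s(v_0)>0$ of the definition. Substituting $\rho(t)=(1+t)^{-1/(2\alpha)}$ and again applying Lemma \ref{HS} produces the lower bound with the same exponent. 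For case (2), the hypothesis $r^*=+\infty$ forces $r_s^*=+\infty$ by Lemma \ref{HS}, so the source term $\int_{B(\rho)}|\xi|^{2s}|\hat v_0|^2\,d\xi$ decays faster than any power of $\rho$; plugging this into the same Fourier splitting and Gronwall scheme above gives the arbitrary polynomial decay rate.

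The main obstacle is the careful matching of the asymptotic behaviour of $\int_{B(\rho)}|\xi|^{2s}|\hat v_0|^2\,d\xi$, which is codified only as a limit at $\rho\to 0$, with the time-dependent radius $\rho(t)\to 0$ entering the Fourier splitting inequality, and in particular extracting a genuine two-sided estimate from the one-sided limit defining $r_s^*$. The lower bound is the more delicate piece: it requires that $P_{r_s^*}^s(v_0)$ be both positive \emph{and} finite, which is exactly the content of Definition \ref{def2.2} and the hypothesis that the decay character of $\Lambda^s v_0$ exists as a finite number.
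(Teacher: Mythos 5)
Your overall strategy --- Plancherel, Fourier splitting over a shrinking ball, the two-sided indicator bounds $c\rho^{2r_s^*+n}\le\int_{B_\rho}|\xi|^{2s}|\hat v_0(\xi)|^2\,d\xi\le C\rho^{2r_s^*+n}$ for small $\rho$, Lemma \ref{HS} to convert $r_s^*$ into $s+r^*$, and the restriction to low frequencies for the lower bound --- is exactly the method behind this lemma. (Note the paper itself quotes this statement from \cite{NS} without proof; the closest argument actually written out here is the proof of Lemma \ref{lem2.3}, of which this is essentially the $p=0$ case.) The lower-bound half and case (2) are fine as you describe them.

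The one step that does not close as written is the upper bound. You fix $\rho(t)^{2\alpha}=(e+t)^{-1}$, so the integrating factor whose logarithmic derivative equals $2\rho(t)^{2\alpha}$ is $g(t)=(e+t)^{2}$. Then $\frac{d}{dt}\bigl[g(t)\|\Lambda^s v(t)\|_{L^2}^2\bigr]\le C(e+t)^{1-\frac1\alpha(\frac n2+s+r^*)}$, and after integrating and dividing by $g$ you only obtain $\|\Lambda^s v(t)\|_{L^2}^2\le C(e+t)^{-\min\{2,\;\frac1\alpha(\frac n2+s+r^*)\}}$; whenever $\frac1\alpha(\frac n2+s+r^*)>2$ the cap at $(e+t)^{-2}$ wins and you fall short of the advertised rate. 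The multiplier exponent has to be chosen \emph{after} the target rate: take $g(t)=(1+t)^{k}$ with $k>\frac1\alpha(\frac n2+s+r^*)$ and $\rho(t)^{2\alpha}=\frac{g'(t)}{2g(t)}=\frac{k}{2(1+t)}$ --- this changes only the constant in $\rho(t)$, not its power, so the indicator bound still applies --- exactly as the paper does in the proof of Lemma \ref{lem2.3}, where it requires $k>r^*+m+2\alpha p+1+\frac n2$. (Alternatively, iterate your $k=2$ estimate finitely many times.) A minor further remark: your two-sided bound on $\int_{B_\rho}|\xi|^{2s}|\hat v_0(\xi)|^2\,d\xi$ requires $0<P^s_{r_s^*}(v_0)<\infty$, which excludes the endpoint $r^*=-\frac n2$ nominally allowed in case (1) of the statement; that endpoint must be treated separately, as in Lemma \ref{lem2.1}(2).
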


We now prove the following result on the decay characterization of solutions to system (\ref{2-1}).
\begin{lemma}
\label{lem2.3}
Let 
 $p\in\mathbb{N}$, $m\in\mathbb{R}^+$ and $v_0\in H^{K}(\mathbb{R}^n)$  has decay character $r^*_m=r^*_m(v_0)$ ($m>0$).   Then, for all $0<  m+2\alpha p\leq K$, the following decay estimates hold:
\begin{itemize}
\item[(1)] If $-\frac n2\leq r^*<\infty$, then there exists  positive constants $C_1$ and $C_2$ such that
$$
C_1(1+t)^{-\frac1{\alpha}(r^*+m+2\alpha p+\frac n2)}\leq \|\partial_t^p\Lambda^mv(t)\|_{L^2}^2 \leq C_2(1+t)^{-\frac1{\alpha}(r^*+m+2\alpha p+\frac n2)};
$$
\item[(2)] if $r^*=\infty$, given any $s>0$, there exists a positive constant $C_2=C_2(s)$ such that
$$
\|\partial_t^p\Lambda^mv(t)\|_{L^2}  \leq C_2(1+t)^{-s},
$$
which means the decay is faster than any algebraic rate.
\end{itemize}
\end{lemma}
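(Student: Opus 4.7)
The plan is to reduce Lemma \ref{lem2.3} to Lemma \ref{lem2.2} by exploiting a simple semigroup identity that holds specifically for solutions of the linear equation (\ref{2-1}). Since $v$ satisfies $v_t = -\Lambda^{2\alpha}v$, an easy induction on $p$ yields $\partial_t^p v = (-1)^p \Lambda^{2\alpha p} v$. Because $\Lambda^m$ commutes with $\Lambda^{2\alpha}$ and with $\partial_t$, this gives $\partial_t^p \Lambda^m v = (-1)^p \Lambda^{m+2\alpha p} v$, and consequently
\begin{equation*}
\|\partial_t^p \Lambda^m v(t)\|_{L^2}^2 = \|\Lambda^{m+2\alpha p} v(t)\|_{L^2}^2.
\end{equation*}
This can also be read directly from the Fourier representation $\widehat{v}(\xi,t) = e^{-|\xi|^{2\alpha}t}\widehat{v}_0(\xi)$ via Plancherel, since $\widehat{\partial_t^p \Lambda^m v}(\xi,t) = (-1)^p |\xi|^{m+2\alpha p} e^{-|\xi|^{2\alpha}t}\widehat{v}_0(\xi)$.

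Next, I would verify the hypotheses needed to apply Lemma \ref{lem2.2} with spatial-derivative order $s = m + 2\alpha p$. The assumption $v_0 \in H^K(\mathbb{R}^n)$ together with $m + 2\alpha p \leq K$ ensures $\Lambda^{m+2\alpha p}v_0 \in L^2(\mathbb{R}^n)$. To translate the decay character hypothesis I would invoke Lemma \ref{HS}, which yields $r^*_{m+2\alpha p}(v_0) = (m+2\alpha p) + r^*(v_0)$ throughout the finite range $-n/2 < r^*(v_0) < \infty$, promotes the endpoint $r^*(v_0) = -n/2$ to $r^*_{m+2\alpha p}(v_0) = -n/2 + m + 2\alpha p$, and preserves the case $r^*(v_0) = \infty$. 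Substituting $s = m + 2\alpha p$ into Lemma \ref{lem2.2} then delivers
\begin{equation*}
C_1(1+t)^{-\frac{1}{\alpha}\left(\frac{n}{2} + r^* + m + 2\alpha p\right)} \leq \|\Lambda^{m+2\alpha p} v(t)\|_{L^2}^2 \leq C_2(1+t)^{-\frac{1}{\alpha}\left(\frac{n}{2} + r^* + m + 2\alpha p\right)}
\end{equation*}
in case (1), and faster-than-polynomial decay in case (2), which are exactly the bounds claimed for $\|\partial_t^p \Lambda^m v(t)\|_{L^2}^2$ in Lemma \ref{lem2.3}.

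The main obstacle in this argument is actually quite mild, since the genuinely analytic work---the Fourier-splitting argument that produces both matching upper and lower bounds---has already been absorbed into Lemma \ref{lem2.2}. The only delicate point is the endpoint $r^* = -n/2$: there, part (3) of Lemma \ref{HS} is essential to confirm that $\Lambda^{m+2\alpha p}v_0$ still possesses a well-defined decay character (namely $-n/2 + m + 2\alpha p$), so that Lemma \ref{lem2.2}(1) applies without modification. Once this book-keeping is done, no further estimates are required and the conclusion follows immediately.
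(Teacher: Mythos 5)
Your argument is correct. The identity $\partial_t^p v=(-1)^p\Lambda^{2\alpha p}v$ for solutions of (\ref{2-1}) is valid (and is exactly what the paper uses on the Fourier side, where it writes $|\partial_t^p\hat v(\xi,t)|^2=|\xi|^{4\alpha p}e^{-2|\xi|^{2\alpha}t}|\hat v_0(\xi)|^2$), the hypothesis $m+2\alpha p\leq K$ does guarantee $\Lambda^{m+2\alpha p}v_0\in L^2$, and Lemma \ref{HS} correctly transports the decay character so that Lemma \ref{lem2.2} with $s=m+2\alpha p$ yields precisely the claimed two-sided bound in case (1) and the faster-than-algebraic decay in case (2).

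The route is genuinely different from the paper's, though, in where the analytic work is placed. The paper does not cite Lemma \ref{lem2.2}; it re-runs the entire Fourier-splitting argument from scratch for $\partial_t^p\Lambda^m v$: it derives the energy identity (\ref{3-1}), introduces the ball $B(t)$ and the weight $g(t)=(\frac t2+b)^k$, uses the finiteness of $P_{r_m^*}(v_0)$ for the upper bound, and separately uses $P_{r_m^*}(v_0)>0$ with the choice $\rho=\rho_0(1+t)^{-\frac1{2\alpha}}$ for the lower bound. You instead outsource all of that to Lemma \ref{lem2.2} and reduce the time derivatives to spatial derivatives by the semigroup identity. Your version is shorter and cleaner, and it makes transparent that Lemma \ref{lem2.3} is really Lemma \ref{lem2.2} at regularity $m+2\alpha p$ in disguise; the paper's self-contained computation has the side benefit of setting up the exact splitting template (the sets $B(t)$, the choice of $g$, the exponent bookkeeping) that is then reused verbatim in the nonlinear estimates of Section 3. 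Your remark about the endpoint $r^*=-\frac n2$ and Lemma \ref{HS}(3) is harmless but not really needed beyond confirming that the decay character of $\Lambda^{m+2\alpha p}v_0$ is well defined there; both statements of Lemmas \ref{lem2.2} and \ref{lem2.3} already include that endpoint in case (1).
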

\begin{proof}(1).
Applying $\partial_t^p$ to (\ref{2-1}), multiplying by $\partial_t^p\Lambda^{2m}v$, we derive that
\begin{equation}
\label{3-1}
\frac12\frac d{dt}\|\partial_t^p\Lambda^mv\|_{L^2}^2 +\|\partial_t^p\Lambda^{m+\alpha}v\|_{L^2}^2=0.
\end{equation}
Using Plancherel's theorem to (\ref{3-1}), it yields that
\begin{equation}
\label{3-2}
\frac d{dt} \int_{\mathbb{R}^n}  |\xi|^{2m}|\partial_t^p\hat{v}(\xi,t)|^2 d\xi  +2\int_{\mathbb{R}^n} |\xi|^{2m+2\alpha}|\partial_t^p\hat{v}(\xi,t)|^2 d\xi=0.
\end{equation}
Let
$$
B(t)=\left\{\xi\in\mathbb{R}^n:|\xi|\leq\rho(t)=\left(\frac{g'(t)}{2g(t) }\right)^{\frac1{2\alpha}}\right\},
$$
where $g\in C^1(\mathbb{R}^n)$, $g(0)=1$ and $g'(t)>0$, for all $t>0$. Hence,
\begin{equation}
\begin{aligned}\label{3-3}&
\frac d{dt}\left(g(t)\int_{\mathbb{R}^n} |\xi|^{2m}|\partial_t^p\hat{v}(\xi,t)|^2d\xi\right)\\
\leq& g'(t)\int_{B(t)} |\xi|^{2m }|\partial_t^p\hat{v}(\xi,t)|^2d\xi
\\
\leq&g'(t)\int_{B(t)} |\xi|^{2m}\left( |\xi|^{2\alpha} \right)^{2p}e^{-2 |\xi|^{2\alpha} t}| \hat{v}(\xi,t)|^2d\xi
\\
\leq&g'(t)\rho^{2(r^*+m+2\alpha p)+n}\rho^{-2(r^*+m+2\alpha p)-n}\int_{B(t)}|\xi|^{2m+4\alpha p} |\hat{v}_0(\xi)|^2d\xi,
\end{aligned}
\end{equation}
for all $0<m+2\alpha p\leq K$.
Since $P_{r^*_m}(v_0)<\infty$ and $r^*(\Lambda^mv_0)=r^*(v_0)+m$, for all $m\geq0$, there exist $\rho_0>0$ and $C>0$ such that for $0<\rho<\rho_0$,
$$
\rho^{-2(r^*+m)-n}\int_{B(t)}|\xi|^{2m}|\hat{v}_0(\xi)|^2d\xi\leq C,
$$
which imply that \begin{equation}
\begin{aligned}\label{3-4} &
\frac d{dt}\left(g(t)\int_{\mathbb{R}^n} |\xi|^{2m}|\partial_t^p\hat{v}(\xi,t)|^2d\xi\right)
 \leq
Cg'(t)\rho^{2(r^*+m+2\alpha p)+n}  .
\end{aligned}
\end{equation}
Taking $g(t)=(\frac t2+b)^{k}$ with $k>r^*+m+2\alpha p+1+\frac n2$, we have    $\rho(t)=C(b+t)^{-\frac12}$. By (\ref{3-4}), we deduce that
\begin{equation}
\begin{aligned}\nonumber
\|\partial_t^p\Lambda^mv(t)\|_{L^2}^2
\leq& C(b+t)^{-k}+C(b+t)^{-\frac1{\alpha}(r^*+m+2\alpha p+\frac n2)}
\\
\leq&C(b+t)^{-\frac1{\alpha}(r^*+m+2\alpha p+\frac n2)}.
\end{aligned}
\end{equation}
For the lower bound of the decay estimate, as $P_{r_m^*}(v_0)>0$, there exist a $\rho_0>0$ and $C>0$ such that for $\rho\in(0,\rho_0]$,
$$
C<\rho^{-2(r^*+m)-n}\int_{B(t)}|\xi|^{2m}|\hat{v}_0(\xi)|^2d\xi,\quad\forall s\in[0,K].
$$
Therefore,
\begin{equation}\begin{aligned}\nonumber
\|\partial_t^p\Lambda^m v(t)\|_{L^2}^2=&\int_{\mathbb{R}^n}|\xi|^{2m}|\partial_t^p\hat{v}(\xi,t)|^2d\xi
\\
\geq&C\int_{B(t)}|\xi|^{2m+4\alpha p}e^{-2\nu|\xi|^{2\alpha}t}|\hat{v}_0(\xi)|^2d\xi
\\
\geq&C e^{-2\nu\rho^{2\alpha}t}\rho^{2(r^*+m+2\alpha p)+n}\rho^{-2(r^*+m+2\alpha p)-n}\int_{B(t)}|\xi|^{2m+4\alpha p}|\hat{v}_0(\xi)|^2d\xi
\\
\geq& Ce^{-2\nu\rho^{2\alpha}t}\rho^{2(r^*+m+2\alpha p)+n}.
\end{aligned}
\end{equation}
Taking $\rho=\rho_0(1+t)^{-\frac1{2\alpha}}$ yields $
e^{-2\nu\rho^{2\alpha}t}\geq C>0$, so
$$
\|\partial_t^p\Lambda^mv(t)\|_{L^2}^2\geq C_1(1+t)^{-\frac1{\alpha}(r^*+m+2\alpha p+\frac n2)}.
$$

(2). For the case $r^*=\infty$, we have $P_r(v_0)=0$ for any $r\in(-\frac n2,\infty)$. So, for any constant $C=C(r)$, there exists $\rho_0>0$ such that for $0<\rho_0<\rho$,
$$
\rho^{-2(r+m)-n}\int_{B(t)}|\xi|^{2m}|\hat{v}_0(\xi)|^2d\xi<C .
$$
As in the proof of the upper bound in (1) with an inequality similar to (\ref{3-3}), we obtain
$$
\|\partial_t^p\Lambda^mv(t)\|_{L^2}^2 \leq C_2(1+t)^{-s},\quad\forall r\in(-\frac n2,\infty).
$$

\end{proof}

\section{Proof of Theorem \ref{thm1.2}}

The following is a key lemma to prove Theorem \ref{thm1.2}.
\begin{lemma}
\label{lem4.1}Suppose that the assumptions listed in Theorem \ref{thm1.2} are satisfied. Then, for $P>0$, we have
\begin{equation}
\begin{aligned}
|\partial_t^P\widehat{\theta}(\xi,t)|^2\leq& C|\xi|^{4\alpha P}e^{-2|\xi|^{2\nu\alpha}t}|\hat{\theta}_0(\xi )|^2  +C|\xi|^{4\alpha P+2}\left[\int_0^t \|\theta\|_{L^2}^2 ds\right]^2\\&+C\sum_{p=0}^{P-1}\sum_{l=0}^p|\xi|^{4\alpha P-4\alpha p-4\alpha+2} \|\partial_t^l\theta\|_{L^2}^2
\|\partial_t^{p-l}\theta\|_{L^2} ^2.
\label{4-1}\end{aligned}
\end{equation}
\end{lemma}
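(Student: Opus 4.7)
The plan is to prove (\ref{4-1}) by induction on $P$, working entirely in Fourier variables. Two ingredients do most of the work. First, the divergence-free structure $\nabla\cdot u = 0$ lets me write $u\cdot\nabla\theta = \nabla\cdot(u\theta)$, so $\widehat{u\cdot\nabla\theta}(\xi) = i\xi\cdot\widehat{u\theta}(\xi)$, which together with H\"{o}lder and Lemma \ref{Zhou} gives
\[
|\widehat{u\cdot\nabla\theta}(\xi,s)| \leq |\xi|\,\|u\theta\|_{L^1} \leq C|\xi|\,\|u\|_{L^2}\|\theta\|_{L^2} \leq C|\xi|\,\|\theta(s)\|_{L^2}^2.
\]
Second, Duhamel's formula $\hat\theta(\xi,t) = e^{-|\xi|^{2\alpha}t}\hat\theta_0(\xi) - \int_0^t e^{-|\xi|^{2\alpha}(t-s)}\widehat{u\cdot\nabla\theta}(\xi,s)\,ds$, combined with the above and $e^{-|\xi|^{2\alpha}(t-s)}\leq 1$, produces the $P=0$ prototype
\[
|\hat\theta(\xi,t)|^2 \leq C\,e^{-2|\xi|^{2\alpha}t}|\hat\theta_0(\xi)|^2 + C|\xi|^2\Bigl[\int_0^t\|\theta(s)\|_{L^2}^2\,ds\Bigr]^2.
\]

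For the base case $P=1$, I would start from the Fourier equation $\partial_t\hat\theta = -|\xi|^{2\alpha}\hat\theta - \widehat{u\cdot\nabla\theta}$ and the elementary bound $|a+b|^2 \leq 2|a|^2 + 2|b|^2$, giving $|\partial_t\hat\theta|^2 \leq 2|\xi|^{4\alpha}|\hat\theta|^2 + 2|\widehat{u\cdot\nabla\theta}|^2$. Substituting the two preliminary estimates reproduces the three terms of (\ref{4-1}) at $P=1$; in particular the nonlinear piece $C|\xi|^2\|\theta\|_{L^2}^4$ is precisely the $p=l=0$ slice of the sum, since $4\alpha\cdot 1 - 4\alpha\cdot 0 - 4\alpha + 2 = 2$.

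For the inductive step, assume (\ref{4-1}) at level $P-1$ and differentiate the equation $P-1$ times in $t$:
\[
\partial_t^P\hat\theta(\xi,t) = -|\xi|^{2\alpha}\,\partial_t^{P-1}\hat\theta(\xi,t) - \partial_t^{P-1}\widehat{u\cdot\nabla\theta}(\xi,t),
\]
so $|\partial_t^P\hat\theta|^2 \leq 2|\xi|^{4\alpha}|\partial_t^{P-1}\hat\theta|^2 + 2|\partial_t^{P-1}\widehat{u\cdot\nabla\theta}|^2$. Multiplying the induction hypothesis by $|\xi|^{4\alpha}$ supplies the leading $|\xi|^{4\alpha P}e^{-2|\xi|^{2\alpha}t}|\hat\theta_0|^2$ and $|\xi|^{4\alpha P+2}$ terms together with the $0\leq p \leq P-2$ portion of the sum, each exponent being shifted up by exactly $4\alpha$. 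The remaining nonlinear term I bound by Leibniz,
\[
\partial_t^{P-1}(u\theta) = \sum_{l=0}^{P-1}\binom{P-1}{l}(\partial_t^l u)(\partial_t^{P-1-l}\theta),
\]
followed by $\widehat{u\cdot\nabla\theta} = i\xi\cdot\widehat{u\theta}$, H\"{o}lder in $L^2\times L^2$, Corollary \ref{Cor} (which replaces $\|\partial_t^l u\|_{L^2}$ by $C\|\partial_t^l\theta\|_{L^2}$), and Cauchy-Schwarz on the finite sum; this yields $|\partial_t^{P-1}\widehat{u\cdot\nabla\theta}|^2 \leq C|\xi|^2\sum_{l=0}^{P-1}\|\partial_t^l\theta\|_{L^2}^2\|\partial_t^{P-1-l}\theta\|_{L^2}^2$, which is exactly the $p=P-1$ slice of the target sum.

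The main obstacle is purely bookkeeping: one has to verify that the $|\xi|^{4\alpha}$-shift of the induction hypothesis lines up the exponents $|\xi|^{4\alpha P - 4\alpha p - 4\alpha + 2}$ correctly for $0 \leq p \leq P-2$ and that the exponent coming from the Leibniz piece, $4\alpha P - 4\alpha(P-1) - 4\alpha + 2 = 2$, matches the $p=P-1$ slice so that the two contributions concatenate into $\sum_{p=0}^{P-1}$. Implicit throughout is that the regularity hypothesis $\theta_0 \in H^{2\alpha P + \kappa}(\mathbb{R}^2)$ in Theorem \ref{thm1.2} gives meaning to all the time derivatives $\partial_t^l\theta$ used in the argument via the relation $\partial_t\theta = -u\cdot\nabla\theta - \Lambda^{2\alpha}\theta$.
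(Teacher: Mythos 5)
Your proof is correct, and it rests on exactly the two ingredients the paper uses: the Duhamel representation combined with $|\widehat{u\cdot\nabla\theta}(\xi)|\leq C|\xi|\,\|u\|_{L^2}\|\theta\|_{L^2}$, and the temporal Leibniz rule together with Corollary~\ref{Cor} to trade $\|\partial_t^l u\|_{L^2}$ for $\|\partial_t^l\theta\|_{L^2}$. The organizational route is genuinely different, though. The paper obtains (\ref{4-1}) in one shot by differentiating the Duhamel integral $P$ times, which yields an explicit identity expressing $\partial_t^P\hat\theta$ as the sum of $(-|\xi|^{2\alpha})^P e^{-|\xi|^{2\alpha}t}\hat\theta_0$, the boundary terms $\sum_{p=0}^{P-1}(-|\xi|^{2\alpha})^{P-1-p}\partial_t^p\widehat{u\cdot\nabla\theta}(\xi,t)$, and a remaining Duhamel integral carrying the factor $|\xi|^{2\alpha P}$; squaring each piece gives the three terms of (\ref{4-1}) directly. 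Your induction via the first-order relation $\partial_t^P\hat\theta=-|\xi|^{2\alpha}\partial_t^{P-1}\hat\theta-\partial_t^{P-1}\widehat{u\cdot\nabla\theta}$ reaches the same three terms without ever writing that combinatorial formula --- which is a small advantage here, since the paper's displayed version of it contains typographical slips in the exponents ($|\xi|^{2\alpha p}$ for $|\xi|^{2\alpha P}$, and $2P-2p-2$ where $P-p-1$ is meant) that your argument never has to touch. Your exponent bookkeeping checks out: multiplying the level-$(P-1)$ estimate by $|\xi|^{4\alpha}$ reproduces the first two terms and the $0\leq p\leq P-2$ portion of the sum, and the Leibniz piece supplies the $p=P-1$ slice with exponent $4\alpha P-4\alpha(P-1)-4\alpha+2=2$. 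The only cost of your route is the need for the $P=0$ prototype as a separate base estimate, which the paper's one-shot computation absorbs automatically.
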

\begin{proof}
Note that
\begin{equation}
\begin{aligned}
\nonumber
\partial_t^P\hat{\theta}(\xi,t)=&(-1)^P|\xi|^{2\alpha p}e^{-\nu|\xi|^2t}\hat{\theta}_0(\xi)+\sum_{p=0}^{P-1}(- \nu|\xi|^{2\alpha})^{2P-2p-2}
\partial_t^p \widehat{u\cdot\nabla \theta} (\xi,t)\\&+\int_0^t(-\nu)^P|\xi|^{2\alpha P}e^{-\nu|\xi|{2\alpha}(t-s)} \widehat{u\cdot\nabla \theta} (\xi,s)ds,
\end{aligned}
\end{equation}
and
\begin{equation}
\begin{aligned}\nonumber
|\partial_t^p \widehat{u\cdot\nabla \theta} (\xi,t)|\leq&|\partial_t^p\sum_j\xi_j\widehat{u_j\theta}(\xi,t)|
\leq C\sum_{l=0}^p|\xi| \|\partial_t^lu\|_{L^2}
\|\partial_t^{p-l}\theta\|_{L^2} .
\end{aligned}
\end{equation}
Summing up, we complete the proof.
\end{proof}

Now, we establish an auxiliary result.

\begin{lemma}
\label{lem4.3}
Let $\theta_0\in H^2(\mathbb{R}^2)$  have the decay character $r^*=r^*(\theta)\in(-1,\infty)$. Suppose that $\theta(x,t)$ is a weak solution to system (\ref{1-1}). Then
\begin{itemize}
\item[(1).] if $r^*\leq1-\alpha$,
$$
\| \partial_t\theta(t)\|_{L^2}^2\leq C(1+t)^{-\frac1{\alpha}(1+r^*)-2 },\quad\hbox{for~large}~t;
$$

\item[(2).] if $r^*\geq1-\alpha$,
$$
\|\partial_t \theta(t)\|_{L^2}^2\leq C(1+t)^{-\frac1{\alpha}(2-\alpha)-2 },\quad\hbox{for~large}~t.
$$
\end{itemize}
\end{lemma}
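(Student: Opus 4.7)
My plan is to bypass any new energy identity and work directly from \eqref{1-1}, which gives the pointwise relation $\partial_t\theta=-u\cdot\nabla\theta-\Lambda^{2\alpha}\theta$; taking $L^2$ norms reduces the desired estimate to a linear piece $\|\Lambda^{2\alpha}\theta\|_{L^2}$ and a nonlinear piece $\|u\cdot\nabla\theta\|_{L^2}$, each of which can be controlled from the already-established decay rates of Lemmas \ref{lem4.2} and \ref{lem4.2-1}, together with Ladyzhenskaya's inequality in $\mathbb{R}^2$ and the $L^2$-identity $\|\Lambda^{\delta}u\|_{L^2}=\|\Lambda^{\delta}\theta\|_{L^2}$ from Lemma \ref{Zhou}. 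A Fourier-splitting proof patterned on Lemma \ref{lem2.3} and fed by Lemma \ref{lem4.1} with $P=1$ would also work but is longer; the direct route avoids that detour.

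For the linear piece I would invoke Lemma \ref{lem4.2-1} with $\kappa=2\alpha$: since $2\alpha<2$ the hypothesis $\theta_0\in H^2$ is enough, and in the regime $r^*\leq 1-\alpha$ the bound reads
\begin{equation*}
\|\Lambda^{2\alpha}\theta(t)\|_{L^2}^2\leq C(1+t)^{-\frac{1}{\alpha}(2\alpha+1+r^*)}=C(1+t)^{-\frac{1}{\alpha}(1+r^*)-2},
\end{equation*}
matching the target exactly; in the regime $r^*\geq 1-\alpha$ the same lemma gives the saturated rate $C(1+t)^{-(2-\alpha)/\alpha-2}$. For the nonlinear piece, H\"older plus Ladyzhenskaya in $\mathbb{R}^2$ and Lemma \ref{Zhou} produce
\begin{equation*}
\|u\cdot\nabla\theta\|_{L^2}^2\leq\|u\|_{L^4}^2\|\nabla\theta\|_{L^4}^2\leq C\|\theta\|_{L^2}\,\|\nabla\theta\|_{L^2}^2\,\|\Delta\theta\|_{L^2},
\end{equation*}
and plugging in Lemma \ref{lem4.2-1} at $\kappa=0,1,2$ yields a rate of $(1+t)^{-2(2+r^*)/\alpha}$ in the first regime---strictly faster than the target whenever $3+r^*\geq 2\alpha$, which is automatic from $r^*>-1$ and $\alpha<1$. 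An entirely parallel computation handles $r^*\geq 1-\alpha$.

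The main obstacle, such as it is, is the exponent bookkeeping: verifying that the nonlinear contribution never dominates the linear one in either regime, and keeping consistent use of the saturated rate $(2-\alpha)/\alpha+2$ in the borderline case. One subtle point worth flagging is that the $\|\Delta\theta\|_{L^2}$ estimate needed for the nonlinear term requires Lemma \ref{lem4.2-1} at $\kappa=2$, which in turn requires $\theta_0\in H^2$---this is exactly why the lemma is stated under the hypothesis $\theta_0\in H^2(\mathbb{R}^2)$ rather than merely $L^2$.
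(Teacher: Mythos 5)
Your proposal is correct, and it takes a genuinely different route from the paper. The paper proves this lemma by an energy estimate: it applies $\partial_t$ to the equation, tests with $\partial_t\theta$, shows the nonlinear terms can be absorbed into the dissipation for large $t$ (using the decay of $\|\Lambda^{2-2\alpha}\theta\|_{L^2}$ from Lemma \ref{lem4.2-1}), arrives at the differential inequality $\frac{d}{dt}\|\partial_t\theta\|_{L^2}^2+\|\partial_t\Lambda^{\alpha}\theta\|_{L^2}^2\le 0$, and then runs the Fourier splitting argument fed by the pointwise Fourier bound of Lemma \ref{lem4.1} with $P=1$ --- exactly the longer route you chose to avoid. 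You instead read $\partial_t\theta=-u\cdot\nabla\theta-\Lambda^{2\alpha}\theta$ off the equation and bound the two pieces separately; your exponent bookkeeping checks out: the linear piece at $\kappa=2\alpha$ reproduces the target rate exactly in both regimes, and the nonlinear piece decays like $(1+t)^{-\frac{2}{\alpha}(2+r^*)}$ (resp.\ $(1+t)^{-\frac{2}{\alpha}(3-\alpha)}$), which beats the target since $3+r^*>2>2\alpha$ (resp.\ $6-2\alpha>2+\alpha$ for $\alpha<1$). Both proofs ultimately lean on the quoted decay rates of Lemmas \ref{lem4.2} and \ref{lem4.2-1}; yours uses them at $\kappa=1,2,2\alpha$ and is shorter and more transparent, while the paper's energy/Fourier-splitting template is the one that is then iterated in Lemmas \ref{lem4.4}--\ref{lem4.6} for higher $P$ and $\kappa$, so the paper's choice here is partly a matter of setting up machinery reused later. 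The only caveat --- shared with the paper's own proof --- is that manipulating $\partial_t\theta$ as an $L^2$ function requires the smoothing of weak solutions for $t>0$ in the subcritical range, which both arguments implicitly assume.
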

\begin{proof}
Applying $\partial_t$ to (\ref{1-1})$_1$, multiplying both side by $\partial_t\theta$, integrating over $\mathbb{R}^2$, we arrive at
\begin{equation}
\begin{aligned}
\nonumber&
\frac12\frac d{dt}\|\partial_t\theta\|_{L^2}^2+ \|\partial_t\Lambda^{\alpha}\theta\|_{L^2}^2
\\
\leq&|\langle\partial_tu\cdot\partial_t\Lambda^{\alpha}\theta,\Lambda^{1-\alpha}\theta\rangle|+|\langle u\cdot\partial_t\Lambda^{\alpha}\theta,\partial_t\Lambda^{1-\alpha}\theta\rangle|\\
&+|\langle\partial_t\Lambda^{1-\alpha}u\cdot\partial_t\Lambda^{\alpha}\theta,\theta\rangle|
+|\langle \Lambda^{1-\alpha}u\cdot\partial_t\Lambda^{\alpha}\theta,\partial_t\Lambda^{1-\alpha}\theta\rangle|
\\
\leq&\|\partial_tu\|_{L^{\frac2{1-\alpha}}}\|\partial_t\Lambda^{\alpha}\theta\|_{L^2}\|\Lambda^{1-\alpha}\theta\|_{L^{\frac2{\alpha}}}
+\|u\|_{L^{\frac2{2\alpha-1}}}\|\partial_t\Lambda^{\alpha}\theta\|_{L^2}\|\partial_t\Lambda^{1-\alpha}\theta\|_{L^{\frac1{1-\alpha}}}
\\
&
+\|\theta\|_{L^{\frac2{2\alpha-1}}}\|\partial_t\Lambda^{\alpha}\theta\|_{L^2}\|\partial_t\Lambda^{1-\alpha}u\|_{L^{\frac1{1-\alpha}}}
+\|\partial_t\theta\|_{L^{\frac2{1-\alpha}}}\|\partial_t\Lambda^{\alpha}\theta\|_{L^2}\|\Lambda^{1-\alpha}u\|_{L^{\frac2{\alpha}}}
\\
\leq&C(\|\Lambda^{2-2\alpha}\theta\|_{L^2}+\|\Lambda^{2-2\alpha}u\|_{L^2})\|\partial_t\Lambda^{\alpha}\theta\|_{L^2}^2
\\
\leq&C(1+t)^{-\min\left\{\frac1{\alpha}( 3-2\alpha+r^*),\frac1{\alpha}(4-3\alpha)\right.\}}\|\partial_t\Lambda^{\alpha}\theta\|_{L^2}^2
\\
\leq&\frac{1}2\|\partial_t\Lambda^{\alpha}\theta\|_{L^2}^2,\quad\hbox{for~large}~t,
\end{aligned}
\end{equation}
that is
\begin{equation}\label{4-2}
\frac d{dt}\|\partial_t\theta\|_{L^2}^2+\|\partial_t\Lambda^{\alpha}\theta\|_{L^2}^2\leq0,\quad\hbox{for~large}~t.
\end{equation}
Applying Plancherel's theorem to (\ref{4-2}), it yields
\begin{equation}
\label{4-3}
\frac d{dt}\int_{\mathbb{R}^2}|\widehat{\partial_t\theta}(\xi,t)|^2d\xi+ \int_{\mathbb{R}^2}|\xi|^{2\alpha}|\widehat{\partial_t\theta}(\xi,t)|^2d\xi\leq0,\quad\hbox{for~large}~t.
\end{equation}
Set
$$
B(t):=\left\{\xi\in\mathbb{R}^2||\xi|^{2\alpha}\leq\frac{g'(t)}{g(t) }\right\},\quad B^c(t):=\mathbb{R}^2\setminus B(t),
$$
where $g(t)$ is a differentiable function of $t$ satisfying
\begin{equation}
\nonumber
g(0)=1,~~g'(t)>0,~~\forall t>0.
\end{equation}
Multiplying (\ref{4-3}) by $g(t)$, we obtain
\begin{equation}
\nonumber
\begin{aligned}
\frac d{dt}\left\{g(t)\int_{\mathbb{R}^3} |\widehat{\partial_t\theta}(\xi,t)|^2 d\xi\right\}
\leq&g'(t)\int_{B(t)} |\widehat{\partial_t\theta}(\xi,t)|^2 d\xi,\quad\hbox{for~large}~t,
\end{aligned}\end{equation}
which together with Lemma \ref{lem4.1} yields that
\begin{equation}\begin{aligned}\label{4-4}
g(t)\int_{\mathbb{R}^2}|\widehat{\partial_t\theta}(\xi,t)|^2d\xi
\leq&C+\int_0^tg'(s)\int_{B(s)}|\xi|^{4\alpha}e^{-2\nu|\xi|^{2\alpha}t}|\hat{\theta}_0(\xi)|^2d\xi ds\\&+C\int_0^tg'(s)\int_{B(s)}|\xi|^{4\alpha+2}\left(\int_0^t\|\theta\|_{L^2}^2ds\right)^2d\xi ds
 .
\end{aligned}
\end{equation}
We estimate now the right hand side of (\ref{4-4}). For the first term, using the estimates from Lemma \ref{lem2.3}, it yields
\begin{equation}
\begin{aligned}\label{4-5}
\int_0^tg'(s)\int_{B(s)}|\xi|^{4\alpha}e^{-2\nu|\xi|^{2\alpha}t}|\hat{\theta}_0(\xi)|^2d\xi ds
\leq  C\int_0^tg'(s)(1+s)^{-\frac1{\alpha}(1+r^*)-2}ds.
\end{aligned}\end{equation}
 For the second term, after integrating in polar coordinates in $B(t)$, by using Lemma \ref{lem4.2}, if $r^*\leq1-\alpha$, we deduce that
\begin{equation}
\begin{aligned}\label{4-6}
&C\int_0^tg'(s)\int_{B(s)}|\xi|^{4\alpha+2}\left(\int_0^t\|\theta\|_{L^2}^2ds\right)^2d\xi ds
\\
\leq&C\int_0^tg'(s)(1+s)^{-2-\frac2{\alpha}}(1+s)^{-\frac2{\alpha}(1+r^*)+2}ds
\leq C\int_0^tg'(s)(1+s)^{-\frac1{\alpha}(4+2r^*)}ds.
\end{aligned}\end{equation}
Thus, for a fixed $r^*$, we choose $g(t)=(1+t)^m$ with some $m>\frac1{\alpha}(1+r^*)+2$. Then $\rho(t)=C(1+t)^{-\frac1{2\alpha}}$.
Combining (\ref{4-4})-(\ref{4-6}) together gives
\begin{equation}\begin{aligned}
\label{4-7} \|\partial_t\theta\|_{L^2}^2
\leq&C(1+t)^{-m}+C(1+t)^{-\frac1{\alpha}(1+r^*)-2}+C(1+t)^{-\frac1{\alpha}(4+2r^*)}
\\
\leq&C(1+t)^{-\frac1{\alpha}(1+r^*)-2}.\quad\hbox{for~large}~t.
\end{aligned}
\end{equation}
On the other hand, since $\alpha\in(\frac12,1)$, if $r^*\geq1-\alpha$, we have
\begin{equation}
\begin{aligned}\label{4-8}
&C\int_0^tg'(s)\int_{B(s)}|\xi|^{4\alpha+2}\left(\int_0^t\|\theta\|_{L^2}^2ds\right)^2d\xi ds
\\
\leq&C\int_0^tg'(s)(1+s)^{-2-\frac2{\alpha}}(1+s)^{ -\frac1{\alpha}(2-\alpha)+2}ds
\\
\leq&C\int_0^tg'(s)(1+s)^{-\frac1{\alpha}(2-\alpha)-\frac2{\alpha}}ds
\leq C\int_0^tg'(s)(1+s)^{-\frac1{\alpha}(2-\alpha)-2}ds.
\end{aligned}\end{equation}
For a fixed $r^*$, we choose $g(t)=(1+t)^m$, for some $m>\frac1{\alpha}(2-\alpha)+2$. Then $\rho(t)=C(1+t)^{-\frac1{2\alpha}}$.
Combining (\ref{4-4}), (\ref{4-5}) and (\ref{4-8}) together gives
\begin{equation}\begin{aligned}
\label{4-9} \|\partial_t\theta\|_{L^2}^2
\leq&C(1+t)^{-m}+C(1+t)^{-\frac1{\alpha}(1+r^*)-2}+C(1+t)^{-\frac1{\alpha}(2-\alpha)-2)}
\\
\leq&C(1+t)^{-\frac1{\alpha}(2-\alpha)-2}.\quad\hbox{for~large}~t.
\end{aligned}
\end{equation}
Hence, we complete the proof.

\end{proof}

The following Lemma \ref{lem4.4} implies that Theorem \ref{thm1.2} holds for $P=1$.

\begin{lemma}
\label{lem4.4}
Let $\theta_0\in H^{2+\kappa}(\mathbb{R}^2)$ ($\kappa\in\mathbb{R}^+$) have the decay character $r^*=r^*(\theta)\in(-1,\infty)$. Suppose that $\theta(x,t)$ is a   solution to system (\ref{1-1}). Then
\begin{itemize}
\item[(1).] if $r^*\leq1-\alpha$,
$$
\| \partial_t\Lambda^{\kappa}\theta(t)\|_{L^2}^2\leq C(1+t)^{-\frac1{\alpha}(1+\kappa+r^*)-2 },\quad\hbox{for~large}~t;
$$

\item[(2).] if $r^*\geq1-\alpha$,
$$
\|\partial_t\Lambda^{\kappa} \theta(t)\|_{L^2}^2\leq C(1+t)^{-\frac1{\alpha}(2+\kappa-\alpha)-2 },\quad\hbox{for~large}~t.
$$
\end{itemize}
\end{lemma}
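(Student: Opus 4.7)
The plan is to adapt the argument of Lemma \ref{lem4.3} by inserting the weight $\Lambda^{\kappa}$ throughout. First I would apply $\partial_t\Lambda^{\kappa}$ to (\ref{1-1})$_1$, test against $\partial_t\Lambda^{\kappa}\theta$, and integrate over $\mathbb{R}^2$ to obtain an energy identity of the form
\begin{equation*}
\frac12\frac d{dt}\|\partial_t\Lambda^{\kappa}\theta\|_{L^2}^2+\|\partial_t\Lambda^{\kappa+\alpha}\theta\|_{L^2}^2=\mathcal{N}(t),
\end{equation*}
where $\mathcal{N}(t)$ collects the nonlinear terms coming from $\partial_t\Lambda^{\kappa}(u\cdot\nabla\theta)$. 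Distributing derivatives as in Lemma \ref{lem4.3} (so that one half-derivative of order $\alpha$ always lands on $\partial_t\theta$ and the remaining derivatives are split symmetrically between $u$ and $\theta$), and then applying H\"older in the exponents $L^{2/(1-\alpha)}$, $L^{2/(2\alpha-1)}$, $L^{1/(1-\alpha)}$, $L^{2/\alpha}$ together with the Sobolev embedding and Corollary \ref{Cor}, every piece of $\mathcal{N}(t)$ is bounded by a product of the type $\|\Lambda^{\sigma}\theta\|_{L^2}\|\partial_t\Lambda^{\kappa+\alpha}\theta\|_{L^2}^2$ with $\sigma=2-2\alpha+\kappa$ or $\sigma=\kappa$. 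By Lemma \ref{lem4.2-1} and Lemma \ref{lem4.3}, all such prefactors decay algebraically for large $t$, so $\mathcal{N}(t)\leq \tfrac12\|\partial_t\Lambda^{\kappa+\alpha}\theta\|_{L^2}^2$ eventually, leaving
\begin{equation*}
\frac d{dt}\|\partial_t\Lambda^{\kappa}\theta\|_{L^2}^2+\|\partial_t\Lambda^{\kappa+\alpha}\theta\|_{L^2}^2\leq 0,\quad\hbox{for~large}~t.
\end{equation*}

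Next I would invoke the Fourier splitting method with the ball $B(t)=\{\xi:|\xi|^{2\alpha}\leq g'(t)/g(t)\}$ and $g(t)=(1+t)^{m}$, which gives $\rho(t)\sim(1+t)^{-1/(2\alpha)}$. Multiplying by $g(t)$ and using Plancherel, this produces
\begin{equation*}
g(t)\|\partial_t\Lambda^{\kappa}\theta\|_{L^2}^2\leq C+\int_0^t g'(s)\int_{B(s)}|\xi|^{2\kappa}|\widehat{\partial_t\theta}(\xi,s)|^2\,d\xi\,ds.
\end{equation*}
Lemma \ref{lem4.1} with $P=1$ controls the integrand on low frequencies by a linear piece bounded by $|\xi|^{2\kappa+4\alpha}e^{-2|\xi|^{2\alpha}s}|\hat{\theta}_0(\xi)|^2$ and a nonlinear piece bounded by $|\xi|^{2\kappa+4\alpha+2}\big(\int_0^s\|\theta\|_{L^2}^2\,d\tau\big)^2$.

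For the linear piece, the decay character hypothesis on $\theta_0$ gives, exactly as in the proof of Lemma \ref{lem2.3}, a contribution of order $(1+s)^{-\frac1{\alpha}(\kappa+1+r^*)-2}$ when $r^*\leq 1-\alpha$ and of order $(1+s)^{-\frac1{\alpha}(\kappa+2-\alpha)-2}$ when $r^*\geq 1-\alpha$ (the saturation at $r^*=1-\alpha$ stems, as usual, from the fact that the relevant integral $\int_{B(s)}|\xi|^{2\kappa+4\alpha}|\hat\theta_0|^2\,d\xi$ must be cut against the $L^2$-bound on $\theta_0$ once $r^*$ exceeds $1-\alpha$). For the nonlinear piece, integrating in polar coordinates on $B(s)$ yields a factor $\rho(s)^{2\kappa+4\alpha+4}$, which combined with the time-integrated $L^2$-decay of $\theta$ coming from Lemma \ref{lem4.2} produces an $s$-rate strictly faster than the linear one, as was already the case in the estimates (\ref{4-6}) and (\ref{4-8}). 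Choosing $m$ larger than $\frac1{\alpha}(\kappa+1+r^*)+2$ (resp.\ $\frac1{\alpha}(\kappa+2-\alpha)+2$) and dividing by $g(t)$ gives the claimed bound.

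The main obstacle is the first step: expanding $\partial_t\Lambda^{\kappa}(u\cdot\nabla\theta)$ generates strictly more terms than the four that appear in Lemma \ref{lem4.3}, since $\Lambda^{\kappa}$ does not obey a Leibniz rule. Handling them rigorously forces me to use Kato--Ponce type commutator estimates combined with Corollary \ref{Cor} and the full strength of Lemma \ref{lem4.2-1} at various orders, and I have to verify in each case that the pointwise-in-time prefactor decays fast enough to be absorbed on the left. Once this bookkeeping is complete, the Fourier splitting step is a routine adaptation of the proof of Lemma \ref{lem4.3} with $\kappa$ added to every exponent.
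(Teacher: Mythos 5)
Your overall architecture (energy estimate for $\partial_t\Lambda^{\kappa}\theta$, then Fourier splitting with Lemma \ref{lem4.1} feeding the low-frequency integrand) is the same as the paper's, but there is a genuine gap in the first step: the claim that every piece of $\mathcal{N}(t)$ can be bounded by $\|\Lambda^{\sigma}\theta\|_{L^2}\,\|\partial_t\Lambda^{\kappa+\alpha}\theta\|_{L^2}^2$ and hence fully absorbed, leaving $\frac d{dt}\|\partial_t\Lambda^{\kappa}\theta\|_{L^2}^2+\|\partial_t\Lambda^{\kappa+\alpha}\theta\|_{L^2}^2\le 0$, is false. The obstruction is the terms in which the time derivative lands on the factor that does \emph{not} carry the high-order spatial derivative, e.g.\ $\|\Lambda^{\kappa-\alpha+1}u\|_{L^4}\|\partial_t\theta\|_{L^4}$. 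Since $\alpha>\frac12$ and $\kappa>0$, $\dot H^{\kappa+\alpha}(\mathbb{R}^2)$ does not embed into $L^4$, so $\|\partial_t\theta\|_{L^4}$ can only be controlled by interpolation, $\|\partial_t\theta\|_{L^4}\lesssim\|\partial_t\Lambda^{\kappa+\alpha}\theta\|_{L^2}^{\frac1{2(\kappa+\alpha)}}\|\partial_t\theta\|_{L^2}^{\frac{2\kappa+2\alpha-1}{2\kappa+2\alpha}}$. After pairing with the single factor $\|\partial_t\Lambda^{\kappa+\alpha}\theta\|_{L^2}$ from the duality, Young's inequality absorbs only the power $1+\frac1{2(\kappa+\alpha)}<2$ and necessarily leaves a residual forcing term of the form $C\|\partial_t\theta\|_{L^2}^2\|\Lambda^{\kappa-\alpha+\frac32}\theta\|_{L^2}^{\frac{4\kappa+4\alpha}{2\kappa+2\alpha-1}}$ on the right-hand side; no choice of H\"older exponents removes it.

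Consequently your differential inequality should read (\ref{4-10}) rather than ``$\le 0$'', and the Fourier splitting must carry the extra contribution $C\int_0^t g'(s)\,s\,\|\partial_t\theta\|_{L^2}^2\|\Lambda^{\kappa-\alpha+\frac32}\theta\|_{L^2}^{\frac{4\kappa+4\alpha}{2\kappa+2\alpha-1}}\,ds$, which is absent from your bookkeeping. This term is not negligible a priori: one has to invoke Lemma \ref{lem4.3} for $\|\partial_t\theta\|_{L^2}^2$ and Lemma \ref{lem4.2-1} for $\|\Lambda^{\kappa-\alpha+\frac32}\theta\|_{L^2}$ and check, as in (\ref{4-15}) and (\ref{4-18}), that its decay rate is at least $(1+s)^{-\frac1{\alpha}(1+\kappa+r^*)-2}$ (resp.\ $(1+s)^{-\frac1{\alpha}(2+\kappa-\alpha)-2}$), using $\frac12<\alpha<1$ and $r^*>-1$. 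Once this term is reinstated and estimated, the remainder of your argument (choice of $g(t)=(1+t)^m$, the linear and quadratic low-frequency integrals) matches the paper and closes the proof.
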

\begin{proof}
 Applying $\partial_t\Lambda^{\kappa}$ to (\ref{1-1})$_1$, multiplying both side by $\partial_t\Lambda^{\kappa} \theta$, integrating over $\mathbb{R}^2$, we arrive at
 \begin{equation}
 \begin{aligned}&
\frac12\frac d{dt}\|\partial_t\Lambda^{\kappa}\theta\|_{L^2}^2+ \|\partial_t\Lambda^{\kappa+\alpha}\theta\|_{L^2}^2
\\
\leq&C\sum_j|\langle\Lambda^{\kappa-\alpha+1}\partial_t(u_j\theta),\partial_t\Lambda^{\kappa+\alpha}\theta\rangle|
\\
\leq &C\|\partial_t\Lambda^{\kappa+\alpha}\theta\|_{L^2}(\|\Lambda^{\kappa-\alpha+1}(u_j\partial_t\theta)\|_{L^2}+\|\Lambda^{\kappa-\alpha+1}(\partial_tu_j\theta)\|_{L^2})
\\
\leq&C\|\partial_t\Lambda^{\kappa+\alpha}\theta\|_{L^2}\left(\|\Lambda^{\kappa-\alpha+1}u_j\|_{L^4}\|\partial_t\theta\|_{L^4}+\|u_j\|_{L^{\frac2{2\alpha-1}}}\|\partial_t\Lambda^{\kappa-\alpha+1}\theta\|_{L^{\frac1{1-\alpha}}}
\right.\\&
\left.+\|\Lambda^{\kappa-\alpha+1}\theta\|_{L^4}\|\partial_t u_j\|_{L^4}+\|\theta\|_{L^{\frac2{2\alpha-1}}}\|\partial_t\Lambda^{\kappa-\alpha+1}u_j\|_{L^{\frac1{1-\alpha}}}\right)
\\
\leq&C\|\partial_t\Lambda^{\kappa+\alpha}\theta\|_{L^2}\left(\|\Lambda^{\kappa-\alpha+\frac32}u\|_{L^2}\|\partial_t\Lambda^{\kappa+\alpha}
\theta\|_{L^2}^{\frac1{2(\kappa+\alpha)}}\|\partial_t\theta\|_{L^2}^{\frac{2\kappa+2\alpha-1}{2\kappa+2\alpha}}+\|\Lambda^{2(1-\alpha)}u
\|_{L^2}\|\partial_t\Lambda^{\kappa+\alpha}\theta\|_{L^{2}}
\right.\\&
\left.+\|\Lambda^{\kappa-\alpha+\frac32}\theta\|_{L^2}\|\partial_t\Lambda^{\kappa+\alpha}
u\|_{L^2}^{\frac1{2(\kappa+\alpha)}}\|\partial_t u\|_{L^2}^{\frac{2\kappa+2\alpha-1}{2\kappa+2\alpha}}+\|\Lambda^{2(1-\alpha)}\theta
\|_{L^2}\|\partial_t\Lambda^{\kappa+\alpha}u\|_{L^{2}}\right)
\\
\leq&C\|\partial_t\Lambda^{\kappa+\alpha}\theta\|_{L^2}\left(\|\Lambda^{\kappa-\alpha+\frac32}\theta\|_{L^2}\|\partial_t\Lambda^{\kappa+\alpha}
\theta\|_{L^2}^{\frac1{2(\kappa+\alpha)}}\|\partial_t\theta\|_{L^2}^{\frac{2\kappa+2\alpha-1}{2\kappa+2\alpha}}+\|\Lambda^{2(1-\alpha)}\theta
\|_{L^2}\|\partial_t\Lambda^{\kappa+\alpha}\theta\|_{L^{2}}\right)
\\
\leq&\frac{1}2\|\partial_t\Lambda^{\kappa+\alpha}\theta\|_{L^2}^2+C\|\partial_t\theta\|_{L^2}^2\|\Lambda^{\kappa-\alpha+\frac32}\theta\|_{L^2}^{\frac{4\kappa+4\alpha}{2\kappa+2\alpha-1}}
 ,\quad\hbox{for~large}~t,\end{aligned}\nonumber
\end{equation}
which means
\begin{equation}
\label{4-10}
\frac d{dt}\|\partial_t\Lambda^{\kappa}\theta\|_{L^2}^2+ \|\partial_t\Lambda^{\kappa+\alpha}\theta\|_{L^2}^2\leq
C\|\partial_t\theta\|_{L^2}^2\|\Lambda^{\kappa-\alpha+\frac32}\theta\|_{L^2}^{\frac{4\kappa+4\alpha}{2\kappa+2\alpha-1}}
 ,\quad\hbox{for~large}~t.
 \end{equation}
Applying Plancherel's theorem to (\ref{4-10}), it yields
\begin{equation}
\label{4-11}
\begin{aligned}&
\frac d{dt}\int_{\mathbb{R}^2}|\widehat{\partial_t\Lambda^{\kappa}
\theta}(\xi,t)|^2d\xi+ \int_{\mathbb{R}^2}|\xi|^{2\alpha}|\widehat{\partial_t\Lambda^{\kappa}\theta}(\xi,t)|^2d\xi
\\
&\leq C\|\partial_t\theta\|_{L^2}^2\|\Lambda^{\kappa-\alpha+\frac32}\theta\|_{L^2}^{\frac{4\kappa+4\alpha}{2\kappa+2\alpha-1}}
 ,\quad\hbox{for~large}~ t.\end{aligned}
\end{equation}
Multiplying (\ref{4-11}) by $g(t)$, we obtain
\begin{equation}
\nonumber
\begin{aligned}&
\frac d{dt}\left\{g(t)\int_{\mathbb{R}^3} |\widehat{\partial_t\Lambda^{\kappa}\theta}(\xi,t)|^2 d\xi\right\}
\\\leq&g'(t)\int_{B(t)}|\xi|^{2\kappa} |\widehat{\partial_t \theta}(\xi,t)|^2 d\xi+Cg(t)\|\partial_t\theta\|_{L^2}^2\|\Lambda^{\kappa-\alpha+\frac32}\theta\|_{L^2}^{\frac{4\kappa+4\alpha}{2\kappa+2\alpha-1}},\quad\hbox{for~large}~t,
\end{aligned}\end{equation}
which yields that
\begin{equation}\begin{aligned}\label{4-12}
g(t)\int_{\mathbb{R}^2}|\widehat{\partial_t\Lambda^{\kappa}\theta}(\xi,t)|^2d\xi
\leq&C+\int_0^tg'(s)\int_{B(s)}|\xi|^{4\alpha+2\kappa}e^{-2\nu|\xi|^{2\alpha}t}|\hat{\theta}_0(\xi)|^2d\xi ds\\&+C\int_0^tg'(s)\int_{B(s)}|\xi|^{4\alpha+2\kappa+2}\left(\int_0^t\|\theta\|_{L^2}^2ds\right)^2d\xi ds
\\&+C\int_0^tg'(s)s\|\partial_t\theta\|_{L^2}^2\|\Lambda^{\kappa-\alpha+\frac32}\theta\|_{L^2}^{\frac{4\kappa+4\alpha}{2\kappa+2\alpha-1}}ds
,\quad\hbox{for~large}~ t.
\end{aligned}
\end{equation}
The right hand side of (\ref{4-12}) can be estimated one by one. For the first term, using the estimates from Lemma \ref{lem2.3}, we have
\begin{equation}
\begin{aligned}\label{4-13}
\int_0^tg'(s)\int_{B(s)}|\xi|^{4\alpha+2\kappa}e^{-2\nu|\xi|^{2\alpha}t}|\hat{\theta}_0(\xi)|^2d\xi ds
\leq  C\int_0^tg'(s)(1+s)^{-\frac1{\alpha}(1+\kappa+r^*)-2}ds.
\end{aligned}\end{equation}
 For the second   term, after integrating in polar coordinates in $B(t)$, by using Lemma \ref{lem4.2}, if $r^*\leq1-\alpha$, then
\begin{equation}
\begin{aligned}\label{4-14}
&C\int_0^tg'(s)\int_{B(s)}|\xi|^{4\alpha+2\kappa+2}\left(\int_0^t\|\theta\|_{L^2}^2ds\right)^2d\xi ds
\\
\leq&C\int_0^tg'(s)(1+s)^{-2-\frac{\kappa}{\alpha}-\frac2{\alpha}}(1+s)^{-\frac2{\alpha}(1+r^*)+2}ds
\leq C\int_0^tg'(s)(1+s)^{-\frac1{\alpha}(4+2r^*+\kappa)}ds.
\end{aligned}\end{equation}
The third term can be estimate as
\begin{equation}
\begin{aligned}\label{4-15}
&C\int_0^tg'(s)s\|\partial_t\theta\|_{L^2}^2\|\Lambda^{\kappa-\alpha+\frac32}\theta\|_{L^2}^{\frac{4\kappa+4\alpha}{2\kappa+2\alpha-1}}ds
\\
\leq&C\int_0^tg'(s)s(1+s)^{-\frac1{\alpha}(1+r^*)-2}(1+s)^{-\frac1{\alpha}(\kappa-\alpha+\frac52+r^*)\frac{4\kappa+4\alpha}{2\kappa+2\alpha-1}}
\\
\leq&C\int_0^tg'(s)(1+s)^{-\frac1{\alpha}(1+\kappa+r^*)-2}ds,
\end{aligned}\end{equation}
where we have used the fact that $\frac12<\alpha<1$ and $r^*>-1$.
For a fixed $r^*$, we choose $g(t)=(1+t)^m$, for some $m>\frac1{\alpha}(1+\kappa+r^*)+2$. Then $\rho(t)=C(1+t)^{-\frac1{2\alpha}}$.
Combining (\ref{4-12})-(\ref{4-15}) together gives
\begin{equation}\begin{aligned}
\label{4-16} \|\partial_t\Lambda^{\kappa}\theta\|_{L^2}^2
\leq&C(1+t)^{-m}+C(1+t)^{-\frac1{\alpha}(1+\kappa+r^*)-2}+C(1+t)^{-\frac1{\alpha}(4+\kappa+2r^*)}\\
&+C(1+t)^{-\frac1{\alpha}(1+\kappa+r^*)-2}
\\
\leq&C(1+t)^{-\frac1{\alpha}(1+\kappa+r^*)-2}.\quad\hbox{for~large}~t.
\end{aligned}
\end{equation}
On the other hand, since $\alpha\in(\frac12,1)$, if $r^*\geq1-\alpha$, then
\begin{equation}
\begin{aligned}\label{4-17}
&C\int_0^tg'(s)\int_{B(s)}|\xi|^{4\alpha+2\kappa+2}\left(\int_0^t\|\theta\|_{L^2}^2ds\right)^2d\xi ds
\\
\leq&C\int_0^tg'(s)(1+s)^{-2-\frac{\kappa}{\alpha}-\frac2{\alpha}}(1+s)^{ -\frac1{\alpha}(2-\alpha)+2}ds
\\
\leq&C\int_0^tg'(s)(1+s)^{-\frac1{\alpha}(2+\kappa-\alpha)-\frac2{\alpha}}ds
\leq C\int_0^tg'(s)(1+s)^{-\frac1{\alpha}(2+\kappa-\alpha)-2}ds.
\end{aligned}\end{equation}
and
\begin{equation}
\begin{aligned}\label{4-18}
&C\int_0^tg'(s)s\|\partial_t\theta\|_{L^2}^2\|\Lambda^{\kappa-\alpha+\frac32}\theta\|_{L^2}^{\frac{4\kappa+4\alpha}{2\kappa+2\alpha-1}}ds
\\
\leq&C\int_0^tg'(s)s(1+s)^{-\frac1{\alpha}(2-\alpha )-2}(1+s)^{-\frac1{\alpha}(\kappa-2\alpha+\frac72)\frac{4\kappa+4\alpha}{2\kappa+2\alpha-1}}
\\
\leq&C\int_0^tg'(s)(1+s)^{-\frac1{\alpha}(2-\alpha+\kappa)-2}ds,
\end{aligned}\end{equation}
For a fixed $r^*$, we choose $g(t)=(1+t)^m$, for some $m>\frac1{\alpha}(2+\kappa-\alpha)+2$. Then $\rho(t)=C(1+t)^{-\frac1{2\alpha}}$.
Combining (\ref{4-12}), (\ref{4-13}), (\ref{4-17}) and (\ref{4-18}) together gives
\begin{equation}\begin{aligned}
\label{4-19} \|\partial_t\Lambda^{\kappa}\theta\|_{L^2}^2
\leq&C(1+t)^{-m}+C(1+t)^{-\frac1{\alpha}(2+\kappa-\alpha)-2}+C(1+t)^{-\frac1{\alpha}(2+\kappa-\alpha)-2 }
\\
\leq&C(1+t)^{-\frac1{\alpha}(2+\kappa-\alpha)-2}.\quad\hbox{for~large}~t.
\end{aligned}
\end{equation}
Hence, we complete the proof.

\end{proof}

On the basis of Lemma \ref{lem4.3} and Lemma \ref{lem4.4}, we can establish the following result:

\begin{lemma}
\label{lem4.5}
Let $\theta_0\in H^{2p}(\mathbb{R}^2)$ ($p\in\mathbb{N}^+$) have the decay character $r^*=r^*(\theta)\in(-1,\infty)$.  Suppose that $\theta(x,t)$  satisfies (\ref{main-1}) and (\ref{main-2}) for $P\leq p-1$.  Then
\begin{itemize}
\item[(1).] if $r^*\leq1-\alpha$,
$$
\| \partial_t^p\theta(t)\|_{L^2}^2\leq C(1+t)^{-\frac1{\alpha}(1 +r^*)-2p },\quad\hbox{for~large}~t;
$$

\item[(2).] if $r^*\geq1-\alpha$,
$$
\|\partial_t^p \theta(t)\|_{L^2}^2\leq C(1+t)^{-\frac1{\alpha}(2 -\alpha)-2p },\quad\hbox{for~large}~t.
$$
\end{itemize}
\end{lemma}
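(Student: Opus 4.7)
The plan is to extend the proof of Lemma \ref{lem4.3} (the base case $p=1$) to arbitrary $p\in\mathbb{N}^+$, combining the Leibniz rule, the inductive hypothesis, and the Fourier splitting argument already employed in the preceding lemmas. First, I would apply $\partial_t^p$ to $(\ref{1-1})_1$, pair with $\partial_t^p\theta$ in $L^2$, and expand the nonlinearity via
\begin{equation*}
\partial_t^p(u\cdot\nabla\theta)=\sum_{l=0}^p\binom{p}{l}\,\partial_t^l u\cdot\nabla\partial_t^{p-l}\theta.
\end{equation*}
The $l=0$ contribution vanishes after integration by parts, since $\nabla\cdot u=0$. The top term $l=p$, namely $\langle\partial_t^p u\cdot\nabla\theta,\partial_t^p\theta\rangle$, is handled by the same four-way H\"older--Sobolev--Riesz splitting used in Lemma \ref{lem4.3}; the resulting prefactor $\|\Lambda^{2-2\alpha}\theta\|_{L^2}+\|\Lambda^{2-2\alpha}u\|_{L^2}$ is controlled via Lemma \ref{lem4.2-1} and Corollary \ref{Cor}, so for $t$ large it can be absorbed into $\frac12\|\partial_t^p\Lambda^\alpha\theta\|_{L^2}^2$. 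For $1\le l\le p-1$, both factors $\partial_t^l u$ and $\partial_t^{p-l}\theta$ carry rapid decay from the induction hypothesis (combined with Corollary \ref{Cor} to convert $u$-norms to $\theta$-norms); Gagliardo--Nirenberg interpolation shows these intermediate terms are also absorbable. The outcome is the dissipation inequality
\begin{equation*}
\frac{d}{dt}\|\partial_t^p\theta\|_{L^2}^2+\|\partial_t^p\Lambda^\alpha\theta\|_{L^2}^2\le 0\quad\text{for $t$ large.}
\end{equation*}

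Next, I would run the Fourier splitting argument verbatim as in Lemmas \ref{lem4.3}--\ref{lem4.4}: choose $g(t)=(1+t)^m$, set $B(t)=\{|\xi|^{2\alpha}\le g'(t)/g(t)\}$ so that $\rho(t)\sim(1+t)^{-1/(2\alpha)}$, and bound $|\widehat{\partial_t^p\theta}(\xi,t)|^2$ on $B(t)$ via Lemma \ref{lem4.1}, which splits the low-frequency mass into three groups. The linear piece, integrated in polar coordinates using Lemma \ref{lem2.3}, yields exactly the target rate. The integral term $|\xi|^{4\alpha p+2}\bigl(\int_0^s\|\theta\|_{L^2}^2 d\tau\bigr)^2$, after using Lemma \ref{lem4.2} for the time integral and polar integration over $B(s)$, decays strictly faster than the target in both cases, thanks to $\alpha\in(\tfrac12,1)$ and $r^*>-1$. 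The inductive double-sum $|\xi|^{4\alpha(p-p'-1)+2}\|\partial_t^l\theta\|_{L^2}^2\|\partial_t^{p'-l}\theta\|_{L^2}^2$ with $0\le l\le p'\le p-1$ is controlled directly by the induction hypothesis. Choosing $m>\frac1\alpha(1+r^*)+2p$ (respectively $m>\frac1\alpha(2-\alpha)+2p$) and dividing by $g(t)$ delivers the claim.

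\textbf{Main obstacle.} The delicate step is the bookkeeping for the inductive double-sum of Lemma \ref{lem4.1}. A direct computation shows that for every admissible pair $(p',l)$ the combined temporal exponent equals
\begin{equation*}
(1+s)^{-2p+2-\frac{2}{\alpha}-\frac{2}{\alpha}(1+r^*)},
\end{equation*}
independent of $p'$ and $l$. Comparing this with the target rate $(1+s)^{-\frac1\alpha(1+r^*)-2p}$ reduces (up to $g'$-integrable surplus) to the inequality $r^*>2\alpha-3$, which follows from $r^*>-1$ and $\alpha<1$. This ensures the induction closes uniformly over all $(p',l)$, including the critical case $p'=p-1$ where the polar-integration factor $\rho(s)^4\sim(1+s)^{-2/\alpha}$ is smallest.
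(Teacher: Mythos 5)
Your overall architecture coincides with the paper's: energy estimate for $\partial_t^p\theta$, Leibniz expansion of $\partial_t^p(u\cdot\nabla\theta)$, absorption of the $l=0$ and $l=p$ terms using the smallness of $\|\Lambda^{2-2\alpha}\theta\|_{L^2}$ for large $t$, then Fourier splitting with the pointwise low-frequency bound of Lemma \ref{lem4.1} and the exponent bookkeeping you describe (your computation of the double-sum exponent $-2p+2-\frac2\alpha-\frac2\alpha(1+r^*)$ and the reduction to $r^*>2\alpha-3$ match the paper's (\ref{4-26-1}) exactly). Your observation that the $l=0$ term vanishes outright by $\nabla\cdot u=0$ is a legitimate simplification of the paper's absorption argument for that term.

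There is, however, one step that does not hold as stated: the claim that the intermediate terms $1\le l\le p-1$ are ``absorbable'' so that one arrives at the clean inequality $\frac{d}{dt}\|\partial_t^p\theta\|_{L^2}^2+\|\partial_t^p\Lambda^\alpha\theta\|_{L^2}^2\le 0$. After Young's inequality, a pairing $\langle\Lambda^{1-\alpha}(\partial_t^l u\,\partial_t^{p-l}\theta),\partial_t^p\Lambda^\alpha\theta\rangle$ yields $\tfrac16\|\partial_t^p\Lambda^\alpha\theta\|_{L^2}^2$ plus a residual of the form $C\bigl(\|\partial_t^l\Lambda^{\frac12}\theta\|_{L^2}^2\|\partial_t^{p-l}\Lambda^{\frac32-\alpha}\theta\|_{L^2}^2+\|\partial_t^l\Lambda^{\frac32-\alpha}\theta\|_{L^2}^2\|\partial_t^{p-l}\Lambda^{\frac12}\theta\|_{L^2}^2\bigr)$; this residual involves only lower-order time derivatives and cannot be dominated by a small multiple of the $p$-th order dissipation, so it must be kept as a forcing term on the right-hand side (this is precisely the paper's (\ref{4-21})--(\ref{4-22})). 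Consequently your Fourier-splitting step is missing the contribution $\int_0^t g(s)F(s)\,ds$ of this forcing, which the paper estimates in (\ref{4-27}) and which decays like $(1+t)^{-\frac1\alpha(4+2r^*)-2p+2}$ --- faster than the target precisely because $r^*\ge 2\alpha-3$, the same condition you already isolated. So the gap is repairable without new ideas: retain the residual, carry it through the splitting as a fourth term in the analogue of (\ref{4-24}), and verify its exponent as above; but as written, the inequality ``$\le 0$'' is not derivable and the subsequent argument silently omits a term that is genuinely present.
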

\begin{proof}
Applying $\partial_t^{p}$ to system (\ref{1-1})$_1$, multiplying both side by $\partial_t^{p}\theta$, integrating over $\mathbb{R}^2$, we arrive at
\begin{equation}
\label{4-20}
 \frac12\frac d{dt}\|\partial_t^p\theta\|_{L^2}^2+ \|\partial_t^p\Lambda^{\alpha}\theta\|_{L^2}^2\leq C|\langle \Lambda^{1-\alpha}\partial_t^p(u_j\theta),\partial_t^p\Lambda^{\alpha}\theta\rangle|.
\end{equation}
Consider the right hand side term of (\ref{4-20}), for large $t$, we have
\begin{equation}
\begin{aligned}
\label{4-21}&
C|\langle \Lambda^{1-\alpha}\partial_t^p(u_j\theta),\partial_t^p\Lambda^{\alpha}\theta\rangle|
\\
=&C|\langle \Lambda^{1-\alpha}\sum_{l=0}^p\partial_t^lu_j\partial_t^{p-l}\theta,\partial_t^p\Lambda^{\alpha}\theta\rangle|
\\
\leq&C|\langle \Lambda^{1-\alpha}  (u_j\partial_t^{p}\theta),\partial_t^p\Lambda^{\alpha}\theta\rangle|+|\langle \Lambda^{1-\alpha}( \partial_t^pu_j \theta),\partial_t^p\Lambda^{\alpha}\theta\rangle|+|\langle \Lambda^{1-\alpha}\sum_{l=0}^p\partial_t^lu_j\partial_t^{p-l}\theta,\partial_t^p\Lambda^{\alpha}\theta\rangle|
\\
\leq&C\|\partial_t^p\Lambda^{\alpha}\theta\|_{L^2}\left(\|\Lambda^{1-\alpha}u\|_{L^{\frac2{\alpha}}}\|\partial_t^p\theta\|_{L^{\frac2{1-\alpha}}}+\|u\|_{L^{\frac2{2\alpha-1}}}\|\partial_t^p\Lambda^{1-\alpha}\theta\|_{L^{\frac1{1-\alpha}}}
\right)\\
&+C\|\partial_t^p\Lambda^{\alpha}\theta\|_{L^2}\left(\|\Lambda^{1-\alpha}\theta\|_{L^{\frac2{\alpha}}}\|\partial_t^pu
\|_{L^{\frac2{1-\alpha}}}+\|\theta\|_{L^{\frac2{2\alpha-1}}}\|\partial_t^p\Lambda^{1-\alpha}u\|_{L^{\frac1{1-\alpha}}}
\right)\\
&+C\|\partial_t^p\Lambda^{\alpha}\theta\|_{L^2}\left(\|\partial_t^lu\|_{L^4}\|\partial_t^{p-l}\Lambda^{1-\alpha}\theta\|_{L^4}+
\|\Lambda^{1-\alpha}\partial_t^lu\|_{L^4}\|\partial_t^{p-l}\theta\|_{L^4}
\right)
\\
\leq&C\|\Lambda^{2-2\alpha}u\|_{L^2}\|\partial_t^p\Lambda^{\alpha}\theta\|_{L^2}^2+C\|\Lambda^{2-2\alpha}\theta\|_{L^2}
\|\partial_t^p\Lambda^{\alpha}\theta\|_{L^2}\|\partial_t^p\Lambda^{\alpha}u\|_{L^2}\\
&+C\|\partial_t^p\Lambda^{\alpha}\theta\|_{L^2}\left(\|\partial_t^l\Lambda^{\frac12}u\|_{L^2}\|\partial_t^{p-l}\Lambda^{\frac32-\alpha}\theta\|_{L^2}+
\|\partial_t^l\Lambda^{\frac32-\alpha}u\|_{L^2}\|\partial_t^{p-l}\Lambda^{\frac12}\theta\|_{L^2}
\right)
\\
\leq&\frac12\|\partial_t^p\Lambda^{\alpha}\theta\|_{L^2}^2+C\left(\|\partial_t^l\Lambda^{\frac12}u\|_{L^2}^2
\|\partial_t^{p-l}\Lambda^{\frac32-\alpha}\theta\|_{L^2}^2+
\|\partial_t^l\Lambda^{\frac32-\alpha}u\|_{L^2}^2\|\partial_t^{p-l}\Lambda^{\frac12}\theta\|_{L^2}^2
\right).
\end{aligned}
\end{equation}
Combining (\ref{4-20}) and (\ref{4-21}) together,  we obtain for large $t$,
\begin{equation}
\label{4-22}
\frac d{dt}\|\partial_t^p\theta\|_{L^2}^2+ \|\partial_t^p\Lambda^{\alpha}\theta\|_{L^2}^2\leq C\left(\|\partial_t^l\Lambda^{\frac12}u\|_{L^2}^2
\|\partial_t^{p-l}\Lambda^{\frac32-\alpha}\theta\|_{L^2}^2+
\|\partial_t^l\Lambda^{\frac32-\alpha}u\|_{L^2}^2\|\partial_t^{p-l}\Lambda^{\frac12}\theta\|_{L^2}^2
\right).
\end{equation}
Applying Plancherel's theorem to (\ref{4-22}), it yields
\begin{equation}
\label{4-23}\begin{aligned}&
\frac d{dt}\int_{\mathbb{R}^2}|\widehat{\partial_t^p\theta}(\xi,t)|^2d\xi+ \int_{\mathbb{R}^2}|\xi|^{2\alpha}|\widehat{\partial_t^p\theta}(\xi,t)|^2d\xi
\\
\leq& C\left(\|\partial_t^l\Lambda^{\frac12}u\|_{L^2}^2
\|\partial_t^{p-l}\Lambda^{\frac32-\alpha}\theta\|_{L^2}^2+
\|\partial_t^l\Lambda^{\frac32-\alpha}u\|_{L^2}^2\|\partial_t^{p-l}\Lambda^{\frac12}\theta\|_{L^2}^2
\right),\quad\hbox{for~large}~t.\end{aligned}
\end{equation}
Multiplying (\ref{4-23}) by $g(t)$, we obtain
\begin{equation}
\nonumber
\begin{aligned} &
\frac d{dt}\left\{g(t)\int_{\mathbb{R}^3} |\widehat{\partial_t^p\theta}(\xi,t)|^2 d\xi\right\}
\\
\leq& g'(t)\int_{B(t)} |\widehat{\partial_t^p\theta}(\xi,t)|^2 d\xi +Cg(t)\left(\|\partial_t^l\Lambda^{\frac12}u\|_{L^2}^2
\|\partial_t^{p-l}\Lambda^{\frac32-\alpha}\theta\|_{L^2}^2\right.
\\&
\left.+
\|\partial_t^l\Lambda^{\frac32-\alpha}u\|_{L^2}^2\|\partial_t^{p-l}\Lambda^{\frac12}\theta\|_{L^2}^2
\right),\quad\hbox{for~large}~t,
\end{aligned}\end{equation}
which yields that
\begin{equation}\begin{aligned}\label{4-24}&
g(t)\int_{\mathbb{R}^2}|\widehat{\partial_t^p\theta}(\xi,t)|^2d\xi
\\
\leq&C \int_0^tg'(s)\int_{B(s)}|\xi|^{ 4\alpha p}e^{-2\nu|\xi|^{2\alpha}t}|\hat{\theta}_0(\xi)|^2d\xi ds\\&+C\int_0^tg'(s)\int_{B(s)}|\xi|^{ 4\alpha p+2}\left(\int_0^t\|\theta\|_{L^2}^2ds\right)^2d\xi ds
\\
&+C\int_0^tg'(s)\int_{B(s)}\sum_{q=0}^{p-1}\sum_{l=0}^q|\xi|^{4\alpha p-4\alpha q-4\alpha+2} \|\partial_t^l\theta\|_{L^2}^2
\|\partial_t^{p-l}\theta\|_{L^2} ^2ds
\\
&+C\int_0^tg'(s)s\left(\|\partial_t^l\Lambda^{\frac12}u\|_{L^2}^2
\|\partial_t^{p-l}\Lambda^{\frac32-\alpha}\theta\|_{L^2}^2+
\|\partial_t^l\Lambda^{\frac32-\alpha}u\|_{L^2}^2\|\partial_t^{p-l}\Lambda^{\frac12}\theta\|_{L^2}^2
\right)ds
 .
\end{aligned}
\end{equation}
The first term of the right hand side of (\ref{4-24}) can be estimated by using the estimates from Lemma \ref{lem2.3}:
\begin{equation}
\begin{aligned}\label{4-25}
\int_0^tg'(s)\int_{B(s)}|\xi|^{ 4\alpha p}e^{-2\nu|\xi|^{2\alpha}t}|\hat{\theta}_0(\xi)|^2d\xi ds
\leq  C\int_0^tg'(s)(1+s)^{-\frac1{\alpha}(1+r^*)-2p}ds.
\end{aligned}\end{equation}
 For the second term  of the right hand side of (\ref{4-24}), after integrating in polar coordinates in $B(t)$, by using Lemma \ref{lem4.2}, if $r^*\leq1-\alpha$, then
\begin{equation}
\begin{aligned}\label{4-26}
&C\int_0^tg'(s)\int_{B(s)}|\xi|^{ 4\alpha p+2}\left(\int_0^t\|\theta\|_{L^2}^2ds\right)^2d\xi ds
\\
\leq&C\int_0^tg'(s)(1+s)^{-\frac2{\alpha}-2  p}(1+s)^{-\frac2{\alpha}(1+r^*)+2}ds
\leq C\int_0^tg'(s)(1+s)^{-\frac1{\alpha}(4+2r^*)-2p+2}ds.
\end{aligned}\end{equation}
If $r^*\leq1-\alpha$, the third term satisfies
\begin{equation}\begin{aligned}
\label{4-26-1}&
C\int_0^tg'(s)\int_{B(s)}\sum_{q=0}^{p-1}\sum_{l=0}^q|\xi|^{4\alpha p-4\alpha q-4\alpha+2} \|\partial_t^l\theta\|_{L^2}^2
\|\partial_t^{p-l}\theta\|_{L^2} ^2ds
\\
\leq&C\int_0^tg'(s)(1+s)^{-2p+2q+2-\frac2{\alpha}}(1+s)^{-\frac1{\alpha}(1+r^*)-2l}(1+s)^{-\frac1{\alpha}(1+r^*)-2(q-l)}ds
\\
\leq&C\int_0^tg'(s)(1+s)^{-\frac2{\alpha}(1+ r^*)-2p+2-\frac2{\alpha}}ds,\quad\hbox{for~large}~t.
\end{aligned}\end{equation}
For the fourth term of right hand side  of (\ref{4-24}), if $r^*\leq1-\alpha$, then
\begin{equation}
\begin{aligned}\label{4-27}
&C\int_0^tg'(s)s\left(\|\partial_t^l\Lambda^{\frac12}u\|_{L^2}^2
\|\partial_t^{p-l}\Lambda^{\frac32-\alpha}\theta\|_{L^2}^2+
\|\partial_t^l\Lambda^{\frac32-\alpha}u\|_{L^2}^2\|\partial_t^{p-l}\Lambda^{\frac12}\theta\|_{L^2}^2
\right)ds
\\
\leq&C\int_0^tg'(s)s(1+s)^{-\frac1{\alpha}(\frac32+r^*)-\frac1{\alpha}(\frac52-\alpha+r^*)-2p}
\\
\leq&C\int_0^tg'(s)(1+s)^{-\frac1{\alpha}(4+2r^*)-2p+2}ds,\quad\hbox{for~large}~t.
\end{aligned}\end{equation}
For a fixed $r^*$, we choose $g(t)=(1+t)^m$, for some $m>\frac1{\alpha}(4+2r^*)-2p+2$. Then $\rho(t)=C(1+t)^{-\frac1{2\alpha}}$.
Combining (\ref{4-24})-(\ref{4-27}) together gives
\begin{equation}\begin{aligned}
\label{4-28} \|\partial_t^p\theta\|_{L^2}^2
\leq&C(1+t)^{-m}+C(1+t)^{-\frac1{\alpha}(1+r^*)-2p}+C(1+t)^{-\frac1{\alpha}(4+2r^*)-2p+2}\\&+C(1+t)^{-\frac2{\alpha}(1+ r^*)-2p+2-\frac2{\alpha}}
\\
\leq&C(1+t)^{-\frac1{\alpha}(1+r^*)-2p}.\quad\hbox{for~large}~t.
\end{aligned}
\end{equation}
On the other hand, since $\alpha\in(\frac12,1)$, if $r^*\geq1-\alpha$, by using the same method as before, for large $t$, we easily obtain
\begin{equation}
\begin{aligned}\label{4-29}
 C\int_0^tg'(s)\int_{B(s)}|\xi|^{4\alpha+4\alpha p+2}\left(\int_0^t\|\theta\|_{L^2}^2ds\right)^2d\xi ds
\leq&
 C\int_0^tg'(s)(1+s)^{-\frac1{\alpha}(2-\alpha)-2p}ds.
\end{aligned}\end{equation}
\begin{equation}\begin{aligned}
\label{4-29-1}&
C\int_0^tg'(s)\int_{B(s)}\sum_{q=0}^{p-1}\sum_{l=0}^q|\xi|^{4\alpha p-4\alpha q-4\alpha+2} \|\partial_t^l\theta\|_{L^2}^2
\|\partial_t^{p-l}\theta\|_{L^2} ^2ds
\\
\leq&C\int_0^tg'(s)(1+s)^{-\frac2{\alpha}(2-\alpha)-2p+2-\frac2{\alpha}}ds.
\end{aligned}\end{equation}
and
\begin{equation}
\begin{aligned}\label{4-30}
&C\int_0^tg'(s)s\left(\|\partial_t^l\Lambda^{\frac12}u\|_{L^2}^2
\|\partial_t^{p-l}\Lambda^{\frac32-\alpha}\theta\|_{L^2}^2+
\|\partial_t^l\Lambda^{\frac32-\alpha}u\|_{L^2}^2\|\partial_t^{p-l}\Lambda^{\frac12}\theta\|_{L^2}^2
\right)ds
\\
\leq&C\int_0^tg'(s)(1+s)^{-\frac1{\alpha}(6-2\alpha)-2p+2}ds,\quad\hbox{for~large}~t.
\end{aligned}\end{equation}
Hence,
For a fixed $r^*$, we choose $g(t)=(1+t)^m$, for some $m>\frac1{\alpha}(2-\alpha)+2p$. Then $\rho(t)=C(1+t)^{-\frac1{2\alpha}}$.
Combining (\ref{4-24}), (\ref{4-25}), (\ref{4-29})-(\ref{4-30}) together gives
\begin{equation}\begin{aligned}
\label{4-31} \|\partial_t^p\theta\|_{L^2}^2
\leq C(1+t)^{-\frac1{\alpha}(2-\alpha)-2p}.\quad\hbox{for~large}~t.
\end{aligned}
\end{equation}
The proof is complete.

\end{proof}

\begin{lemma}
\label{lem4.6}
Let $\theta_0\in H^{2p+\kappa}(\mathbb{R}^2)$ ($\kappa\in\mathbb{R}^+,~p\in\mathbb{N}^+$) have the decay character $r^*=r^*(\theta)\in(-1,\infty)$. Suppose that $\theta(x,t)$  satisfies (\ref{main-1}) and (\ref{main-2}) for $P\leq p-1$. Then
\begin{itemize}
\item[(1).] if $r^*\leq1-\alpha$,
$$
\| \partial_t^p\Lambda^{\kappa}\theta(t)\|_{L^2}^2\leq C(1+t)^{-\frac1{\alpha}(1+\kappa+r^*)-2p },\quad\hbox{for~large}~t;
$$

\item[(2).] if $r^*\geq1-\alpha$,
$$
\|\partial_t^p\Lambda^{\kappa} \theta(t)\|_{L^2}^2\leq C(1+t)^{-\frac1{\alpha}(2+\kappa-\alpha)-2p },\quad\hbox{for~large}~t.
$$
\end{itemize}
\end{lemma}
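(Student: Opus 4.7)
The plan is to mimic the argument of Lemma \ref{lem4.4} with the stronger inductive hypothesis available: the estimates (\ref{main-1}) and (\ref{main-2}) are already known for every $P\le p-1$ and every $\kappa\ge 0$, and Lemma \ref{lem4.5} supplies them at $(P,\kappa)=(p,0)$ as well. First, I apply $\partial_t^p\Lambda^\kappa$ to (\ref{1-1})$_1$, test against $\partial_t^p\Lambda^\kappa\theta$ in $L^2(\mathbb{R}^2)$, and use $\nabla\cdot u=0$ to shift one derivative onto the test function, producing the energy identity
$$
\tfrac12\tfrac{d}{dt}\|\partial_t^p\Lambda^\kappa\theta\|_{L^2}^2+\|\partial_t^p\Lambda^{\kappa+\alpha}\theta\|_{L^2}^2
\;\le\;C\sum_{l=0}^{p}\binom{p}{l}\bigl|\langle\Lambda^{\kappa-\alpha+1}(\partial_t^l u_j\,\partial_t^{p-l}\theta),\;\partial_t^p\Lambda^{\kappa+\alpha}\theta\rangle\bigr|.
$$

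Second, I split the sum into the two extreme terms $l=0$ and $l=p$ and the intermediate ones $1\le l\le p-1$. The extreme terms are handled exactly as in Lemma \ref{lem4.4}: by H\"older in $L^{2/\alpha}$, $L^{2/(2\alpha-1)}$ and $L^{1/(1-\alpha)}$, followed by Sobolev embedding and Corollary \ref{Cor} to trade $u$ for $\theta$, one obtains the bound
$\tfrac12\|\partial_t^p\Lambda^{\kappa+\alpha}\theta\|_{L^2}^2+C\|\partial_t^p\theta\|_{L^2}^2\|\Lambda^{\kappa-\alpha+3/2}\theta\|_{L^2}^{(4\kappa+4\alpha)/(2\kappa+2\alpha-1)}$, the first piece being absorbed into the dissipation. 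For each intermediate $l$, the product $\partial_t^l u_j\cdot\partial_t^{p-l}\theta$ is split by H\"older/Gagliardo--Nirenberg into factors of the form $\|\partial_t^l\Lambda^{s_1}\theta\|_{L^2}\cdot\|\partial_t^{p-l}\Lambda^{s_2}\theta\|_{L^2}$ with $s_1+s_2=\kappa-\alpha+3/2$ distributed symmetrically; since $1\le l\le p-1$, both factors fall under the inductive hypothesis and therefore decay algebraically at a rate strictly faster than the target. This yields a differential inequality
$$
\tfrac{d}{dt}\|\partial_t^p\Lambda^\kappa\theta\|_{L^2}^2+\|\partial_t^p\Lambda^{\kappa+\alpha}\theta\|_{L^2}^2\;\le\; R(t),\qquad t\gg 1,
$$
with $R(t)$ an explicit polynomial in $(1+t)^{-1}$.

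Third, I pass to Fourier variables, multiply by $g(t)=(1+t)^m$ for $m$ larger than the target exponent, and apply the Fourier splitting method on the ball $B(t)=\{|\xi|^{2\alpha}\le g'(t)/g(t)\}$, which gives $\rho(t)\sim(1+t)^{-1/(2\alpha)}$. The low--frequency piece $\int_{B(t)}|\widehat{\partial_t^p\Lambda^\kappa\theta}|^2$ is dominated via Lemma \ref{lem4.1}, replacing $|\xi|^{4\alpha P}$ by $|\xi|^{4\alpha p+2\kappa}$: the linear term $|\xi|^{4\alpha p+2\kappa}e^{-2|\xi|^{2\alpha}t}|\hat\theta_0|^2$ is treated exactly by Lemma \ref{lem2.3} and produces the borderline rate $(1+t)^{-\frac1\alpha(1+\kappa+r^*)-2p}$ (or $(1+t)^{-\frac1\alpha(2-\alpha+\kappa)-2p}$ respectively); the Duhamel terms, after integrating in polar coordinates on $B(t)$, are controlled by combining the decay of $\|\theta\|_{L^2}^2$ from Lemma \ref{lem4.2}, the decay of $\|\partial_t^q\theta\|_{L^2}^2$ from Lemma \ref{lem4.5}, and the inductive hypothesis for $1\le q\le p-1$. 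A careful bookkeeping shows that every such term is at most of the target order, so dividing by $g(t)$ yields (\ref{main-1}) and (\ref{main-2}) with $P=p$ and general $\kappa>0$.

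The main obstacle I expect is not any single estimate but the bookkeeping of the many cross terms produced by the Leibniz rule in the nonlinearity: for each intermediate $l$ and each choice of Sobolev split $(s_1,s_2)$ with $s_1+s_2=\kappa-\alpha+3/2$, one must check that applying the inductive hypothesis to both factors and then integrating over $B(t)$ produces an exponent strictly below $-\frac1\alpha(1+\kappa+r^*)-2p$ (respectively $-\frac1\alpha(2+\kappa-\alpha)-2p$). The hypothesis $\tfrac12<\alpha<1$ and $r^*>-1$ is exactly what one needs here to make every such exponent beat the borderline; this is the same mechanism that ensured (\ref{4-15}) and (\ref{4-18}) in Lemma \ref{lem4.4}, and it is the crucial algebraic inequality to verify.
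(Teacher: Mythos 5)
Your proposal follows essentially the same route as the paper: the same energy estimate obtained by testing $\partial_t^p\Lambda^\kappa$ of the equation against $\partial_t^p\Lambda^\kappa\theta$, the same splitting of the Leibniz sum into the two extreme terms (absorbed into the dissipation up to a remainder controlled by Lemma \ref{lem4.5}) and the intermediate terms (controlled by the inductive hypothesis), followed by the identical Fourier splitting with $g(t)=(1+t)^m$ and the low-frequency bound via Lemmas \ref{lem4.1}, \ref{lem2.3} and \ref{lem4.2}. The only difference is that you defer the exponent bookkeeping that the paper carries out explicitly in its estimates of $J_1$, $J_2$, $J_3$ and of the terms in the splitting inequality, but you correctly identify that $\tfrac12<\alpha<1$ and $r^*>-1$ are what make every resulting exponent beat the borderline rate, which is exactly the mechanism the paper uses.
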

\begin{proof}First of all, we consider the case $r^*\leq1-\alpha$.
 Applying $\partial_t^p\Lambda^{\kappa}$ to (\ref{1-1})$_1$, multiplying both side by $\partial_t^p\Lambda^{\kappa} \theta$, integrating over $\mathbb{R}^2$, we arrive at
 \begin{equation}
 \begin{aligned}&
\frac12\frac d{dt}\|\partial_t^p\Lambda^{\kappa}\theta\|_{L^2}^2+ \|\partial_t^p\Lambda^{\kappa+\alpha}\theta\|_{L^2}^2
\\
\leq &C\|\partial_t^p\Lambda^{\kappa+\alpha}\theta\|_{L^2} \|\Lambda^{\kappa-\alpha+1}(u_j\partial_t^p
\theta)\|_{L^2}+C\|\partial_t^p\Lambda^{\kappa+\alpha}\theta\|_{L^2}\|\Lambda^{\kappa-\alpha+1}(\partial_t^pu_j\theta)\|_{L^2}
\\&+C\|\partial_t^p\Lambda^{\kappa+\alpha}\theta\|_{L^2}\sum_{l=1}^{p-1}\|\Lambda^{\kappa-\alpha+1}(\partial_t^l
u_j\partial_t^{p-l}\theta)\|_{L^2}
\\
=&J_1+J_2+J_3 .\end{aligned}\label{4-32}
\end{equation}
We will estimate $J_1$, $J_2$ and $J_3$ one by one. Firstly,
\begin{equation}
\begin{aligned}
\label{4-33}
J_1\leq&C\|\partial_t^p\Lambda^{\kappa+\alpha}\theta\|_{L^2}\left(\|\Lambda^{\kappa-\alpha+1}u\|_{L^{4}}\|\partial_t^p\theta\|_{L^4}+\|u\|_{L^{\frac2{2\alpha-1}}}\|\partial_t^p
\Lambda^{\kappa-\alpha+1}\theta\|_{L^{\frac1{1-\alpha}}} \right)
\\
\leq&C\|\partial_t^p\Lambda^{\kappa+\alpha}\theta\|_{L^2}\left(\|\Lambda^{\kappa-\alpha+\frac32}u
\|_{L^{2}} \|\partial_t^p\Lambda^{\kappa+\alpha}\theta\|_{L^2}^{\frac1{2\kappa+2\alpha}}\|\partial_t^p\theta\|_{L^2}^{\frac{2\kappa+2\alpha-1}{2\kappa+2\alpha}}\right.
\\
&\left.+\|\Lambda^{2-2\alpha}u\|_{L^{2}}\|\partial_t^p
\Lambda^{\kappa+\alpha}\theta\|_{L^2} \right)
\\
\leq&\frac{1}{12}\|\partial_t^p\Lambda^{\kappa+\alpha}\theta\|_{L^2}^2+C\|\partial_t^p\theta\|_{L^2}^2\|\Lambda^{\kappa-\alpha+\frac32}\theta\|_{L^2}^{\frac{4\kappa+4\alpha}{2\kappa+2\alpha-1}}
+C(1+t)^{-  \frac1{\alpha}( 3-2\alpha+r^*) }\|\partial_t^p\Lambda^{\kappa+\alpha}\theta\|_{L^2}^2
\\
\leq&\frac{1}6\|\partial_t^p\Lambda^{\kappa+\alpha}\theta\|_{L^2}^2+C\|\partial_t^p\theta\|_{L^2}^2\|\Lambda^{\kappa-\alpha+\frac32}\theta\|_{L^2}^{\frac{4\kappa+4\alpha}{2\kappa+2\alpha-1}}
\\
\leq&\frac{1}6\|\partial_t^p\Lambda^{\kappa+\alpha}\theta\|_{L^2}^2+C(1+t)^{-\frac1{\alpha}(1+r^*)-2p}(1+t)^{-\frac1{\alpha}(\kappa-\alpha+\frac52+r^*)\frac{4\kappa+4\alpha}{2\kappa+2\alpha-1}}\\
\leq&\frac{1}6\|\partial_t^p\Lambda^{\kappa+\alpha}\theta\|_{L^2}^2+C(1+t)^{-\frac1{\alpha}(1+\kappa+r^*)-2p-1},\quad\hbox{for~large}~t.
\end{aligned}
\end{equation}
Similarly,
\begin{equation}
\label{4-34}\begin{aligned}
J_2
\leq &\frac{1}6\|\partial_t^p\Lambda^{\kappa+\alpha}\theta\|_{L^2}^2+C\|\partial_t^p\theta\|_{L^2}^2\|\Lambda^{\kappa-\alpha+\frac32}\theta\|_{L^2}^{\frac{4\kappa+4\alpha}{2\kappa+2\alpha-1}}
\\
\leq&\frac{1}6\|\partial_t^p\Lambda^{\kappa+\alpha}\theta\|_{L^2}^2+C(1+t)^{-\frac1{\alpha}(1+\kappa+r^*)-2p-1},\quad\hbox{for~large}~t.
\end{aligned} \end{equation}
On the other hand, we have
\begin{equation}\begin{aligned}
\label{4-35}
J_3\leq&C\|\partial_t^p\Lambda^{\kappa+\alpha}\theta\|_{L^2}\sum_{l=1}^{p-1}\left(\|\Lambda^{\kappa-\alpha+1}\partial_t^lu\|_{L^4}\|\partial_t^{p-l}\theta\|_{L^4}
+\|\partial_t^lu\|_{L^4}\|\Lambda^{\kappa-\alpha+1}\theta\|_{L^4}
\right)\\
\leq&C\|\partial_t^p\Lambda^{\kappa+\alpha}\theta\|_{L^2}\sum_{l=1}^{p-1}\left(\|\Lambda^{\kappa-\alpha+\frac32}\partial_t^lu\|_{L^2}\|\partial_t^{p-l}\Lambda^{\frac12}
\theta\|_{L^2}
+\|\partial_t^l\Lambda^{\frac12}u\|_{L^2}\|\Lambda^{\kappa-\alpha+\frac32}\theta\|_{L^2}
\right)
\\
\leq&\frac{1}6\|\partial_t^p\Lambda^{\kappa+\alpha}\theta\|_{L^2}^2+C\sum_{l=1}^{p-1}\left(\|\Lambda^{\kappa-\alpha+\frac32}\partial_t^l\theta\|_{L^2}^2
\|\partial_t^{p-l}\Lambda^{\frac12}
\theta\|_{L^2}^2
+\|\partial_t^l\Lambda^{\frac12}\theta\|_{L^2}^2\|\Lambda^{\kappa-\alpha+\frac32}\theta\|_{L^2}^2
\right)\\
\leq&\frac{1}6\|\partial_t^p\Lambda^{\kappa+\alpha}\theta\|_{L^2}^2+C(1+t)^{-\frac1{\alpha}(\kappa-\alpha+\frac52+r^*)-2l}
(1+t)^{-\frac1{\alpha}(\frac32+r^*)-2(p-l)}\\
\leq&\frac{1}6\|\partial_t^p\Lambda^{\kappa+\alpha}\theta\|_{L^2}^2+C(1+t)^{-\frac1{\alpha}(\kappa+4+2r^*)-2p+1},\quad\hbox{for~large}~t.
\end{aligned} \end{equation}
Owning to (\ref{4-32})-(\ref{4-35}), we derive that
\begin{equation}\begin{aligned}
\label{4-36}&\frac d{dt}\|\partial_t^p\Lambda^{\kappa}\theta\|_{L^2}^2+ \|\partial_t^p\Lambda^{\kappa+\alpha}\theta\|_{L^2}^2\\
\leq&  C(1+t)^{-\frac1{\alpha}(1+\kappa+r^*)-2p-1}+C(1+t)^{-\frac1{\alpha}(\kappa+4+2r^*)-2p+1},\quad\hbox{for~large}~t.
\end{aligned} \end{equation}
Applying Plancherel's theorem to (\ref{4-36}), it yields
\begin{equation}
\label{4-37}\begin{aligned}&
\frac d{dt}\int_{\mathbb{R}^2}|\widehat{\partial_t^p\Lambda^{\kappa}\theta}(\xi,t)|^2d\xi+ \int_{\mathbb{R}^2}|\xi|^{2\alpha}|\widehat{\partial_t^p\Lambda^{\kappa}\theta}(\xi,t)|^2d\xi
\\
&\leq C(1+t)^{-\frac1{\alpha}(1+\kappa+r^*)-2p-1}+C(1+t)^{-\frac1{\alpha}(\kappa+4+2r^*)-2p+1},\quad\hbox{for~large}~t.\end{aligned}
\end{equation}
Multiplying (\ref{4-37}) by $g(t)$, we obtain
\begin{equation}
\nonumber
\begin{aligned} &
\frac d{dt}\left\{g(t)\int_{\mathbb{R}^3} |\widehat{\partial_t^p\Lambda^{\kappa}\theta}(\xi,t)|^2 d\xi\right\}
\\
\leq& g'(t)\int_{B(t)} |\widehat{\partial_t^p\Lambda^{\kappa}\theta}(\xi,t)|^2 d\xi+Cg(t)(1+t)^{-\frac1{\alpha}(1+\kappa+r^*)-2p-1} \\&+Cg(t)(1+t)^{-\frac1{\alpha}(\kappa+4+2r^*)-2p+1},\quad\hbox{for~large}~t,
\end{aligned}\end{equation}
which yields that
\begin{equation}\begin{aligned}\label{4-38}&
g(t)\int_{\mathbb{R}^2}|\widehat{\partial_t^p\Lambda^{\kappa}\theta}(\xi,t)|^2d\xi
\\\leq&C+\int_0^tg'(s)\int_{B(s)}|\xi|^{2\kappa+4\alpha+4\alpha p}e^{-2\nu|\xi|^{2\alpha}t}|\hat{\theta}_0(\xi)|^2d\xi ds\\&+C\int_0^tg'(s)\int_{B(s)}|\xi|^{2\kappa+4\alpha+4\alpha p+2}\left(\int_0^t\|\theta\|_{L^2}^2ds\right)^2d\xi ds
\\
&+C\int_0^tg'(s)\int_{B(s)}\sum_{q=0}^{p-1}\sum_{l=0}^q|\xi|^{2\kappa+4\alpha p-4\alpha q-4\alpha+2} \|\partial_t^l\theta\|_{L^2}^2
\|\partial_t^{p-l}\theta\|_{L^2} ^2ds
\\
&+C\int_0^tg'(s)(1+t)^{-\frac1{\alpha}(1+\kappa+r^*)-2p} +C\int_0^tg'(s)s(1+s)^{-\frac1{\alpha}(\kappa+4+2r^*)-2p+1}ds
 .
\end{aligned}
\end{equation}
We estimate now the right hand side of (\ref{4-38}). For the first term, using the estimates from Lemma \ref{lem2.3}, we have
\begin{equation}
\begin{aligned}\label{4-39}
\int_0^tg'(s)\int_{B(s)}|\xi|^{2\kappa+4\alpha+4\alpha p}e^{-2\nu|\xi|^{2\alpha}t}|\hat{\theta}_0(\xi)|^2d\xi ds
\leq  C\int_0^tg'(s)(1+s)^{-\frac1{\alpha}(\kappa+1+r^*)-2p}ds.
\end{aligned}\end{equation}
 For the second term, after integrating in polar coordinates in $B(t)$, by using Lemma \ref{lem4.2},  we get
\begin{equation}
\begin{aligned}\label{4-40}
&C\int_0^tg'(s)\int_{B(s)}|\xi|^{2\kappa+4\alpha+4\alpha p+2}\left(\int_0^t\|\theta\|_{L^2}^2ds\right)^2d\xi ds
\\
\leq&C\int_0^tg'(s)(1+s)^{-2-\frac2{\alpha}-2\alpha p}(1+s)^{-\frac2{\alpha}(\kappa+1+r^*)+2}ds
\\
\leq&C\int_0^tg'(s)(1+s)^{-\frac1{\alpha}(\kappa+4+2r^*)-2p}ds.
\end{aligned}\end{equation}
For the third term of right hand side  of (\ref{4-38}),
\begin{equation}\begin{aligned}
\label{4-26-2}&
C\int_0^tg'(s)\int_{B(s)}\sum_{q=0}^{p-1}\sum_{l=0}^q|\xi|^{2\kappa+4\alpha p-4\alpha q-4\alpha+2} \|\partial_t^l\theta\|_{L^2}^2
\|\partial_t^{p-l}\theta\|_{L^2} ^2ds
\\
\leq&C\int_0^tg'(s)(1+s)^{-\frac{\kappa}{\alpha}-2p+2q+2-\frac2{\alpha}}(1+s)^{-\frac1{\alpha}(1+r^*)-2l}(1+s)^{-\frac1{\alpha}(1+r^*)-2(q-l)}ds
\\
\leq&C\int_0^tg'(s)(1+s)^{-\frac{\kappa}{\alpha}-\frac2{\alpha}(1+ r^*)-2p+2-\frac2{\alpha}}ds.
\end{aligned}\end{equation}
We estimate the last term as follows:
\begin{equation}
\begin{aligned}\label{4-41}
  \int_0^tg'(s)s(1+s)^{-\frac1{\alpha}(\kappa+4+2r^*)-2p+1}ds\leq  \int_0^tg'(s) (1+s)^{-\frac1{\alpha}(\kappa+4+2r^*)-2p+2}ds,~\hbox{for~large}~t.
\end{aligned}\end{equation}
For a fixed $r^*$, we choose $g(t)=(1+t)^m$, for some $m>\frac1{\alpha}(\kappa+4+2r^*)-2p+2$. Then $\rho(t)=C(1+t)^{-\frac1{2\alpha}}$.
Combining (\ref{4-24})-(\ref{4-27}) together gives
\begin{equation}\begin{aligned}
\label{4-42} \|\partial_t^p\Lambda^{\kappa}\theta\|_{L^2}^2
\leq&C(1+t)^{-m}+C(1+t)^{-\frac1{\alpha}(1+\kappa+r^*)-2p}+C(1+t)^{-\frac1{\alpha}(\kappa+4+2r^*)-2p+2}\\
&+C(1+t)^{-\frac{\kappa}{\alpha}-\frac2{\alpha}(1+ r^*)-2p+2-\frac2{\alpha}}
\\
\leq&C(1+t)^{-\frac1{\alpha}(1+\kappa+r^*)-2p}.\quad\hbox{for~large}~t.
\end{aligned}
\end{equation}
For the case $r^*\geq1-\alpha$, by using the same method as above, we easily obtain
\begin{equation}\begin{aligned}
\label{4-43} \|\partial_t^p\Lambda^{\kappa}\theta\|_{L^2}^2
\leq&C(1+t)^{-\frac1{\alpha}(\kappa+2-\alpha)-2p},\quad\hbox{for~large}~t.
\end{aligned}
\end{equation}
Owning to (\ref{4-2}) and (\ref{4-43}), we complete the proof.

\end{proof}

Now, we give the proof of Theorem \ref{thm1.2}.
\begin{proof}[Proof of Theorem \ref{thm1.2}]
  Lemmas \ref{lem4.3} and \ref{lem4.4} imply  that Theorem \ref{thm1.2} holds for $p=1$;
 Suppose that Theorem \ref{thm1.2} holds for $p\leq P-1$, then Lemmas \ref{lem4.5}-\ref{lem4.6} show that it also holds for $p=P$. Hence, we complete the proof of Theorem \ref{thm1.2}.

\end{proof}

\section*{Acknowledgement} XZ was supported by   NNSFC (grant No. 11401258)  and China Postdoctoral Science Foundation (grant No. 2015M581689
). 
 This work was done when XZ was visiting the Institute of Mathematics for Industry of Kyushu University. He appreciate the hospitality of Prof. Fukumoto, MS. Sasaguri and IMI.

}


\begin{thebibliography}{99}
\bibitem{A}C. T. Anh, P. T. Trang, Decay characterization of solutions to the viscous Camassa-Holm equations, Nonlineary  31(2018), 621-650.

\bibitem{ADE} C. Bjorland, M. E. Schonbek, Poincar\'{e}'s inequality and diffusive evolution equations, Adv. Differential Equations 14(2009), 241-260.

\bibitem{B} L. Brandolese, Characterization of solutions to dissipative systems with sharp algebraic decay, SIAM J. Math. Anal. 48(2016), 1616-1633.
\bibitem{Luis}L. A. Caffarelli, A. Vasseur, Drift diffusion equations with fractional diffusion and the quasi-geostrophic equation, Ann.  Math.   171(2010),  1903-1930,

\bibitem{CL}J. Carrillo, L. Lucas, The asymptotic behavior of subcritical dissipative quasi-geostrophic equations, Nonlinearity 21(2008), 1001-1018.
\bibitem{Chae}D. Chae, P. Constantin, J. Wu, Dissipative models generalizing the 2D Navier-Stokes and surface quasi-geostrophic equations, Indiana Univ. Math. J. 61(2012), 1997-2018.

\bibitem{CM}Q. Chen, C. Miao, Z. Zhang, A new Bernstein's inequality and the 2D dissipative quasi-geostrophic equation, Commun. Math. Phys. 271(2007), 821-838.
 \bibitem{CIW}P. Constantin, G. Iyer, J. Wu, Global regularity for a modified critical dissipative quasi-geostrophic equation, Indiana Univ. Math. J. 57(2008), 2681-2692.

\bibitem{Con}P. Constantin, A. Majda, E. Tabak, Formation of strong fronts in the 2-D quasi-geostrophic thermal active scalar, Nonlinearity 7(1994), 1495-1533.

\bibitem{CD}D. C\'{o}rdoba, Nonexistence of simple hyperbolic blow-up for the quasi-geostrophic equation, Ann. Math. 148(1998), 1135-1152.
\bibitem{CA}A. C\'{o}rdoba, D. C\'{o}rdoba, A maximum principle applied to quasi-geostrophic equations, Commun. Math. Phys. 249(2004), 511-528.
\bibitem{DL}B. Dong, Q. Liu, Asymptotic profile of solutions to the two-dimensional dissipative quasi-geostrophic equation, Sci. China Math. 53(2010),  2733-2748.
\bibitem{CW}P. Constantin, J. Wu, Behavior of solutions of 2D quasi-geostrophic equations, SIAM J. Math. Anal. 30(1999), 937-948.
\bibitem{FNP} L. C. F. Ferreira, C. J. Niche, G. Plana, Decay of solutions to dissipative modified quasi-geostrophic equations, Proc. Amer. Math. Soc. 245(2017), 287-301.
\bibitem{Ju}N. Ju, The maximum principle and the global attractor for the 2D dissipative quasi-geostrophic equation, Commun. Math. Phys. 255(2005), 161-182.
\bibitem{KA}A. Kiselev, F. Nazarov,A. Volberg, Global well-posedness for the critical 2D dissipative quasi-geostrophic equation, Invent. Math. 167(2007), 445-453.
\bibitem{MT}A. Majda, E. Tabak, A two-dimensional model for quasigeostrophic flow: comparison with the two-dimensional Euler flow, Phys. D 98(1996), 515-522.
 \bibitem{MX}C. Miao, L. Xue, Global well-posedness for a modified criticla dissipative quasi-geostrophic equation, J. Differential Equations 252(2012), 792-818.
\bibitem{NS}C. J. Niche, M. E. Schonbek, Decay characterization of solutions to dissipative equations, J. London Math. Soc. 91(2)(2015), 573-595.

\bibitem{Niche}C. J. Niche, M. Schonbek, Decay of weak solutions to the 2D dissipative quasi-geostrophic equation, Commun. Math. Phys. 276(2007), 93-115.
\bibitem{Niche2}C. J. Niche, Decay characterization of solutions to Navier-Stokes-Voigt equations in terms of the initial datum, J. Differential Equations 260(2016), 4440-4453.
\bibitem{N3}C. J. Niche, M. Schonbek, Comparison of decay of solutions to two compressible approximations to Navier-Stokes equations, Bull. Braz. Math. Soc. (N.S.) 47(2016),   641-654.

\bibitem{SG}S. G. Resnick, Dynamical problems in non-linear advective partial differential equations, ProQuest LLC, Ann Arbor, MI, Thesis (Ph.D.)-The University of Chicago, 1995.


\bibitem{SS}M. Schonbek, T. Schonbek, Asymptotic behavior to dissipative quasi-geostrophic flows, SIAM J. Math. Anal. 35(2003), 357-375.


\bibitem{S1}M. E. Schonbek, $L^2$ decay for weak solutions of the Navier-Stokes equations. Arch. Ration. Mech. Anal. 88(2)(1985), 209-222.

\bibitem{S2}M. E. Schonbek, Large time behaviour of solutions to the Navier-Stokes equations. Comm. Partial Differential Equations 11(7)(1986), 733-763.
\bibitem{Zhao}X. Zhao, Decay of solutions to a new Hall-MHD system in $\mathbb{R}^3$, C. R. Acad. Sci. Paris, Ser. I. 355(2017), 310-317.
\bibitem{Z}X. Zhao, Asymptotic behavior of solutions to a new Hall-MHD system, Acta Appl. Math.  Online, https://doi.org/10.1007/s10440-018-0170-5.
\bibitem{Zhou2}Y. Zhou, Asymptotic behaviour of the solutions to the 2D dissipative quasi-geostrophic flows. Nonlinearity  21(9)(2008), 2061-2071.

\end{thebibliography}
\end{document}